\journalname{Geometriae Dedicata}
\spnewtheorem{thm}{Theorem}{\bf}{\it}
\spnewtheorem*{holthm}{Holonomy Theorem}{\bf}{\it}
\spnewtheorem{lem}[thm]{Lemma}{\bf}{\it}
\spnewtheorem{cor}[thm]{Corollary}{\bf}{\it}
\spnewtheorem*{clm}{Claim}{\bf}{\rm}
\spnewtheorem*{dfn}{Definition}{\bf}{\rm}
\spnewtheorem*{rmk}{Remark}{\bf}{\rm}
\newcommand{\setm}{\smallsetminus}
\newcommand{\R}{{\mathbb R}}
\newcommand{\C}{{\mathbb C}}
\title{
There is no triangulation of the torus with \\
vertex degrees $5,6,\ldots,6,7$
and related results:\\ 
Geometric proofs for combinatorial theorems
}
\titlerunning{There is no 5,7-triangulation of the torus}
\author{Ivan Izmestiev \and 
  Robert B. Kusner \and 
  G\"unter Rote \and 
  Boris Springborn \and 
  John M. Sullivan}
\authorrunning{Izemestiev, Kusner, Rote, Springborn and Sullivan}
\institute{%
I.~Izmestiev \at
  Freie Universit\"at Berlin, Arnimallee 2, 14195 Berlin \\
  \email{ivan.izmestiev@gmail.com}
\and
R.~B.~Kusner \at
  University of Massachusetts, Amherst, MA 01003-9305 \\
  \email{kusner@math.umass.edu}
\and
G.~Rote \at
  Freie Universit\"at Berlin, Takustr. 9, 14195 Berlin \\
  \email{rote@inf.fu-berlin.de}
\and
B.~Springborn \at
  Technische Universit\"at M\"unchen, Boltzmannstr. 3, 85748 Garching \\
  \email{boris.springborn@tum.de}
\and
J.~M.~Sullivan \at
  Technische Universit\"at Berlin, Str. des 17. Juni 136, 10623 Berlin \\
  Tel.: +49-30-314-29279 \\
  Fax.: +49-30-314-29260 \\
  \email{sullivan@math.tu-berlin.de}
}
\begin{document}

\date{10 August 2012; revised 27 August 2012}

\maketitle

\begin{abstract}
  There is no $5,\!7$-triangulation of the torus, that is, no
  triangulation with exactly two exceptional vertices, of degree~$5$
  and~$7$. Similarly, there is no $3,\!5$-quadrangula\-tion. The vertices
  of a $2,\!4$-hexangulation of the torus cannot be bicolored. Similar
  statements hold for $4,\!8$-triangulations and
  $2,\!6$-quadrangulations. We prove these results, of which the first
  two are known and the others seem to be new, as corollaries of a theorem
  on the holonomy group of a euclidean cone metric on the torus with
  just two cone points. We provide two proofs of this theorem: One
  argument is metric in nature, the other relies on the induced
  conformal structure and proceeds by invoking the residue theorem.
  Similar methods can be used to prove a theorem
  of Dress on infinite triangulations of the plane with exactly two
  irregular vertices. The non-existence results for torus
  decompositions provide infinite families of graphs which cannot be
  embedded in the torus.
  \keywords{torus triangulation \and euclidean cone metric \and holonomy \and meromorphic differential \and residue theorem \and Burgers vector}
  \subclass{05C10 \and 30F10 \and  57M50} 
\end{abstract}

\section{Introduction}

In any triangulation of the torus, the average vertex degree is~$6$,
so vertices of degree $d\ne6$ can be considered \emph{exceptional}.
It is easy to find \emph{regular} triangulations with no exceptional vertices,
as in Figure~\ref{fig:reg-examples}.
\begin{figure}
\centering
(a)\hspace{-1em}\raisebox{-40pt}{\includegraphics[scale=.6]{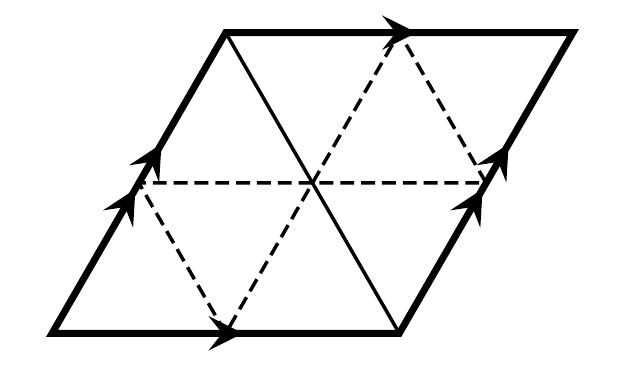}}\hspace{2em}%
(b)\hspace{-1em}\raisebox{-40pt}{\includegraphics[scale=.6]{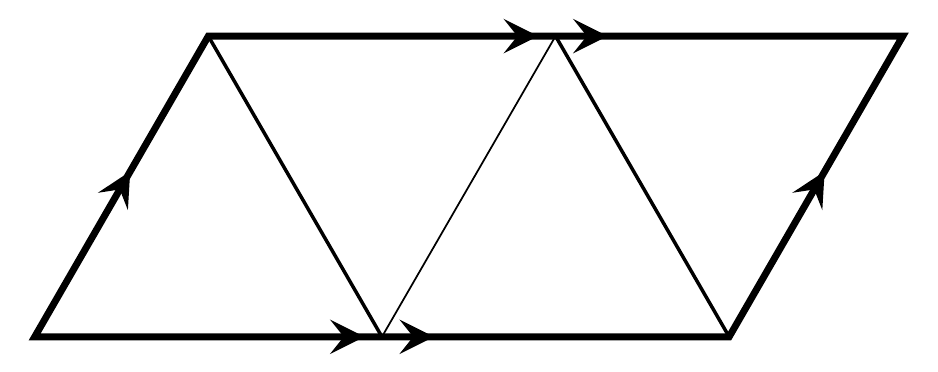}}\hspace{1.8em}%
(c)\raisebox{-50pt}{\includegraphics[scale=.4]{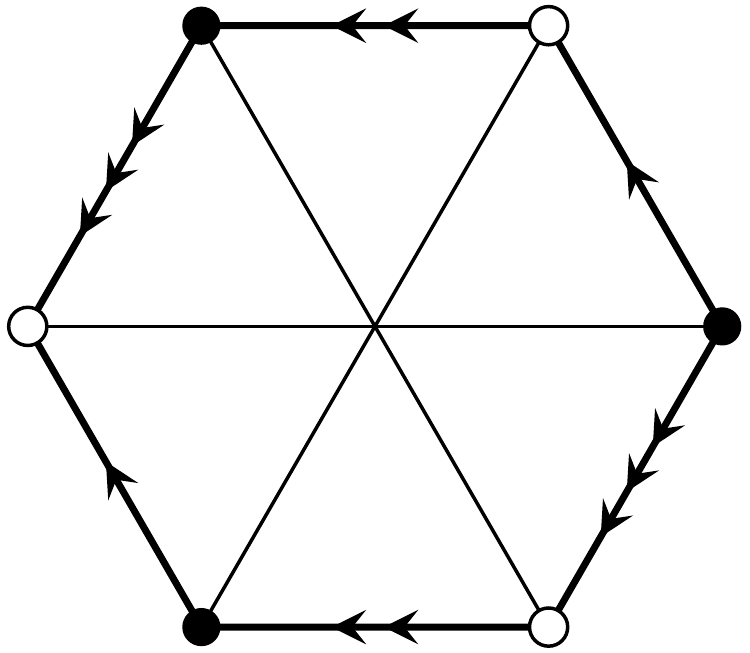}}
\caption[Regular triangulations of a torus]
{The simplest regular triangulations of the torus.
(a)~The solid lines show a triangulation with a single vertex
of degree~$6$.  Any regular triangulation is a cover
of this one.  After 
each triangle is split into four (dashed
lines) there are
four vertices.
(b)~The unique regular triangulation with two vertices.
(c)~There are two regular triangulations with three vertices, one
analogous to~(b),
and this more symmetric one. 
}\label{fig:reg-examples}
\end{figure}
Applying a single edge flip to such a triangulation produces a
triangulation with four exceptional vertices (assuming the four
vertices in question are distinct): two of degree~$5$ and two of
degree~$7$, as in Figure~\ref{fig:examples57}(a).  We call this a
$5^27^2$-triangulation.
\begin{figure}
  \centering
  (a)\hspace{-1em}%
  \raisebox{-40pt}{\includegraphics[scale=.6]{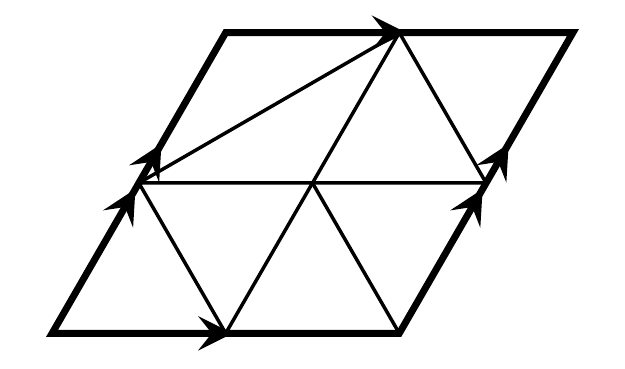}}
  \hspace{2cm}%
  (b)
  \raisebox{-45pt}{\includegraphics[scale=.6]{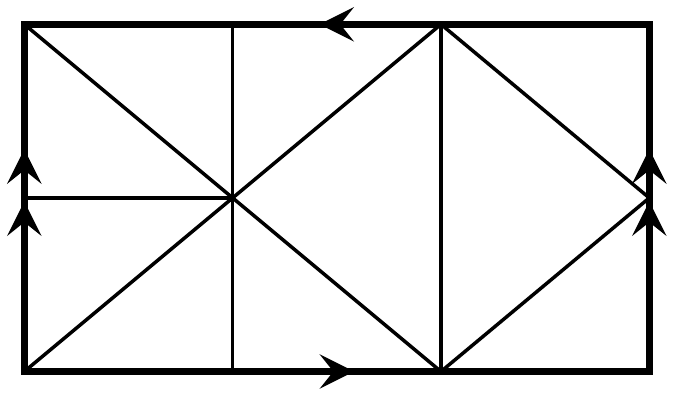}}
\caption[Triangulations with valence five and seven]
{(a)~Flipping one edge in the refined triangulation
of Figure~\ref{fig:reg-examples}(a) gives a $5^27^2$-triangulation
which has only the four exceptional vertices.
(b)~The Klein bottle does have $5,\!7$-triangulations,
for example this one with five vertices.}\label{fig:examples57}
\end{figure}
Similarly, we can produce examples
of triangulations with just two exceptional vertices, assuming these
have degrees other than~$5$ and~$7$. Figure~\ref{fig:examples}
shows $4,\!8$-, $3,\!9$-, $2,\!10$- and $1,\!11$-triangulations
of the torus.
\begin{figure}
  \centering
  (a)
  \raisebox{-40pt}{\includegraphics[scale=.6]{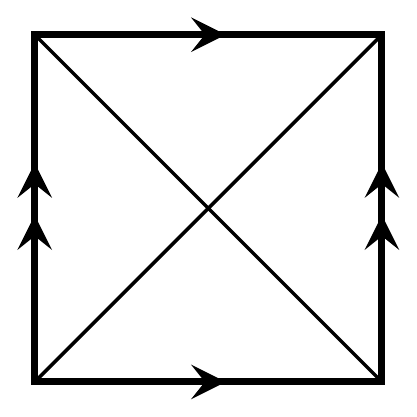}}
  \hspace{2em}(b)\hspace{-2em}
  \raisebox{-37pt}{\includegraphics[scale=.6]{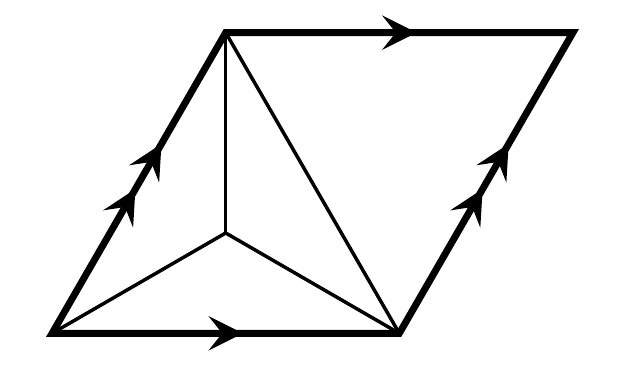}}
  \hspace{1em}
  (c)
  \raisebox{-40pt}{\includegraphics[scale=.6]{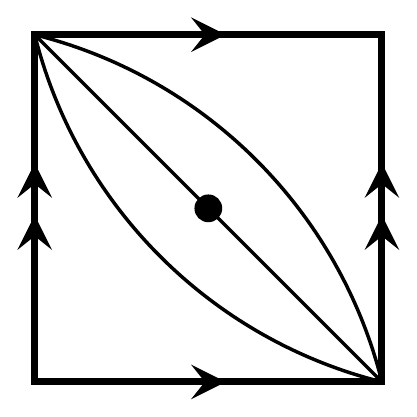}}
  \hspace{1em}
  (d)
  \raisebox{-40pt}{\includegraphics[scale=.6]{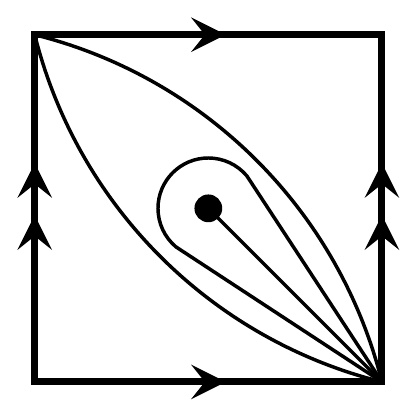}}
\caption[Example triangulations of a torus]
{The irregular triangulations of the torus with exactly two vertices:
(a)~a $4,\!8$-triangulation,
(b)~a $3,\!9$-triangulation,
(c)~a $2,\!10$-triangulation and
(d)~a $1,\!11$-triangulation.
}
\label{fig:examples}
\end{figure}
However:

\begin{thm}[\foreignlanguage{slovak}{Jendro^l \& Jucovi^c}~\cite{JeJu}]
  \label{thm:57}
  The torus has no $5,\!7$-triangulation, that is, no triangulation with
  exactly two exceptional vertices, of degree $5$ and $7$.
\end{thm}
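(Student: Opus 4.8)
The plan is to translate the combinatorial statement into flat geometry and then into complex analysis. First I would equip the triangulation with the Euclidean cone metric in which every triangle is equilateral with unit edges. A vertex of degree $d$ then becomes a cone point of cone angle $d\pi/3$; the two exceptional vertices of a putative $5,\!7$-triangulation give cone angles $5\pi/3$ and $7\pi/3$, i.e.\ curvatures $+\pi/3$ and $-\pi/3$. Since these sum to zero, the Gauss--Bonnet condition is satisfied and gives no obstruction. The obstruction must therefore be finer, and I would look for it in the holonomy of the flat structure and in the conformal geometry it induces.

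Next I would record the holonomy. Away from the two cone points the metric is flat, so there is a developing map $w$ into $\C$, unique up to post-composition with an orientation-preserving isometry, together with a holonomy representation of the fundamental group of the twice-punctured torus into $\mathrm{Isom}^+(\C)$. Because the triangles develop onto triangles of the triangular lattice $\Lambda=\Z[\zeta]$ with $\zeta=e^{i\pi/3}$, every holonomy transformation must preserve $\Lambda$; hence its rotational part lies in the group $\mu_6=\langle\zeta\rangle$ of sixfold rotations. In particular $dw$ is a multivalued holomorphic $1$-form whose monodromy is multiplication by elements of $\mu_6$.

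The heart of the argument is to manufacture from $w$ a genuine meromorphic object on the underlying elliptic curve $X=\C/\Gamma$ determined by the conformal class of the cone metric. Since all monodromies lie in $\mu_6$, the sixth tensor power $(dw)^{6}$ is single-valued: it is a meromorphic section of the sixth power of the canonical bundle. As $X$ has trivial canonical bundle, trivialized by the holomorphic $1$-form $dz$, this section equals $f\,(dz)^6$ for a meromorphic function $f$ on $X$. Away from the cone points $w$ is a local biholomorphism, so $f$ has neither zeros nor poles there; near a cone point coming from a vertex of degree $d$, in a local holomorphic coordinate $t$ vanishing there one has $w\sim w_0+c\,t^{d/6}$, so $f$ vanishes to order $d-6$. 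Thus the divisor of $f$ is $\sum_i(d_i-6)\,(z_i)$, summed over the exceptional vertices. For a $5,\!7$-triangulation this divisor is $(z_7)-(z_5)$: a single simple zero and a single simple pole.

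The final step is to invoke the residue theorem, in the guise that drives Abel's theorem. A meromorphic function on an elliptic curve cannot have a single simple pole, for it would then be a degree-one map $X\to\mathbb{P}^1$, i.e.\ a biholomorphism, contradicting genus one; equivalently, integrating $z\,d\log f$ forces $z_7-z_5\in\Gamma$, so that $z_5=z_7$ on $X$, which is absurd since the two cone points are distinct. This contradiction shows that no $5,\!7$-triangulation exists. I expect the main obstacle to be the bookkeeping that makes $(dw)^6$ a bona fide meromorphic section with exactly the claimed divisor: establishing the developing map, verifying that the rotational holonomy lands in $\mu_6$, and above all confirming the local normal form $w\sim w_0+c\,t^{d/6}$ at the cone points so that the orders $d-6$ are correct. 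Everything else is formal, and the same computation explains why the companion cases survive: for a $1,\!11$-triangulation the divisor is $5(z_{11})-5(z_1)$, and degree-five functions on an elliptic curve are plentiful. A second, purely metric proof should be available by tracking the translational holonomy (the Burgers vector) of a loop enclosing both cone points directly, but I would lead with the conformal argument above.
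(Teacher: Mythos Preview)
Your proposal is correct and is essentially the paper's own conformal proof: equip the torus with the equilateral cone metric, observe that the rotational holonomy lands in $C_6$, pass to $(dw)^6$ to obtain a well-defined meromorphic section of $K^6$ on the underlying elliptic curve, compute its divisor from the local normal form at the cone points, and conclude via the nonexistence of an elliptic function with a single simple pole. The only organizational difference is that the paper isolates the middle step as a general ``Holonomy Theorem'' (for cone points of curvature $\pm 2\pi/n$ and holonomy in $C_n$) so that the same lemma simultaneously handles the companion Theorems on quadrangulations and hexangulations, and it also supplies a second, purely metric proof of that theorem via shortest geodesics---the alternative you allude to at the end.
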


We can also consider quadrangulations of the torus. 
In this case, the average vertex degree is $4$, and an analogous
theorem holds:

\begin{thm}[Barnette, \foreignlanguage{slovak}{Jucovi^c} \& Trenkler~\cite{BJT}]
  \label{thm:35}
  The torus has no $3,\!5$-quad\-ran\-gu\-la\-tion, that is, no
  quadrangulation with exactly two exceptional vertices, of degree~$3$
  and~$5$.
\end{thm}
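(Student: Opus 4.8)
The plan is to convert the combinatorial hypothesis into a statement about euclidean cone metrics, and then contradict it by complex analysis, exactly in the spirit of the Holonomy Theorem. Suppose a $3,\!5$-quadrangulation existed. I would build a flat metric on the torus by declaring every quadrilateral face to be a unit square and gluing neighbouring faces isometrically along their shared (unit-length) edges. Since all edges have the same length this is consistent and yields a euclidean cone metric on $T^2$. The total angle at a vertex of degree $d$ is $d\cdot\tfrac\pi2$, so the ordinary degree-$4$ vertices are smooth points ($2\pi$), while the two exceptional vertices become cone points with cone angles $\tfrac{3\pi}2$ and $\tfrac{5\pi}2$, i.e.\ angle deficits $+\tfrac\pi2$ and $-\tfrac\pi2$. (These sum to $0$, as Gauss--Bonnet demands on the torus.) Thus the quadrangulation would produce precisely a euclidean cone metric on $T^2$ with exactly two cone points, and my task is to rule such a metric out.

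The key structural observation is that the gluing is built from unit squares. Developing the metric into the plane, every vertex lands in (a translate of) the integer lattice $\Z^2$, and the transition between overlapping charts is an orientation-preserving isometry of $\R^2$ preserving $\Z^2$. Hence the rotational (linear) part of the holonomy around \emph{every} loop is a symmetry of $\Z^2$, that is, a rotation by a multiple of $\tfrac\pi2$; around the two cone points it is the quarter-turn and its inverse. So the linear holonomy group is contained in the cyclic group $\Z/4$ generated by rotation by $\tfrac\pi2$.

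The heart of the argument is then a residue computation. The cone metric induces a conformal structure, making $T^2$ a Riemann surface $\C/\Lambda$ carrying a nowhere-zero holomorphic $1$-form $dz$. Locally the developing map has derivative a holomorphic $1$-form $\omega$; it is multivalued, but by the previous paragraph its holonomy is multiplicative in $\Z/4$, so $\omega\mapsto i^k\omega$ around loops and the quartic differential $Q:=\omega^4$ is single-valued and meromorphic on $\C/\Lambda$. A local model at a cone point shows that cone angle $2\pi+\tfrac\pi2 n$ corresponds to order $n$ of $Q$; our cone angles $\tfrac{3\pi}2$ and $\tfrac{5\pi}2$ therefore give a simple pole at the degree-$3$ vertex and a simple zero at the degree-$5$ vertex, with $Q$ holomorphic and nonzero elsewhere. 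Writing $Q=g\,(dz)^4$, the function $g$ is meromorphic on the torus with a \emph{single simple pole}. Then $g\,dz$ is a meromorphic $1$-form with exactly one pole, which is simple and hence has nonzero residue --- contradicting the residue theorem, which forces the residues on a compact Riemann surface to sum to zero. This contradiction establishes Theorem~\ref{thm:35}.

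The step I expect to be the main obstacle is the local complex-analytic bookkeeping at the cone points: one must verify that a euclidean cone metric extends to an honest smooth conformal structure at each cone point and that a cone of angle $\theta$ contributes order $\tfrac{2\theta}\pi-4$ to the quartic differential $Q$. Once this dictionary between cone angles and orders of $Q$ is established, the global contradiction via the residue theorem is immediate; and the very same scheme, with unit squares replaced by unit equilateral triangles and $\Z/4$ by $\Z/6$, recovers Theorem~\ref{thm:57}. I would also keep in mind the alternative, purely metric route hinted at by the Burgers-vector viewpoint: with $\rho$ the holonomy representation, $a,b$ the homology generators of the torus and $\gamma_1,\gamma_2$ small loops around the cone points, one tracks the translational parts directly and shows the relation $[\rho(a),\rho(b)]=\rho(\gamma_1)\rho(\gamma_2)$ cannot be satisfied; this avoids complex analysis but demands more delicate control of the rotation centres.
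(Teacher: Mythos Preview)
Your argument is correct and is precisely the paper's conformal proof (Section~\ref{sec:conformal_proof}, via Lemma~\ref{lem:principal_divisor}) specialized to $n=4$: the paper's degree-$n$ differential $(dw)^n$ is your quartic $Q$, the local model $u_j=(w-w_j)^{n/(n-k_j)}$ supplies exactly the cone-angle/order dictionary you flag as the main obstacle, and the contradiction is the same nonexistence of an elliptic function with a single simple pole. One caution on your closing remark: Section~\ref{sec:Burgers} of the paper analyzes the commutator/Burgers-vector route you sketch and concludes that it yields only the weaker statement that some generator $\alpha$ or $\beta$ has nontrivial rotational holonomy, from which Theorems~\ref{thm:57}--\ref{thm:26quad} do not appear to follow; so that alternative, as stated, is not a complete proof.
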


On the other hand, $2,\!4$- and $3^25^2$-quadrangulations do exist,
as shown in Figure~\ref{fig:quadrs}.
\begin{figure}
  \centering
  (a)
  \raisebox{-40pt}{\includegraphics[scale=.6]{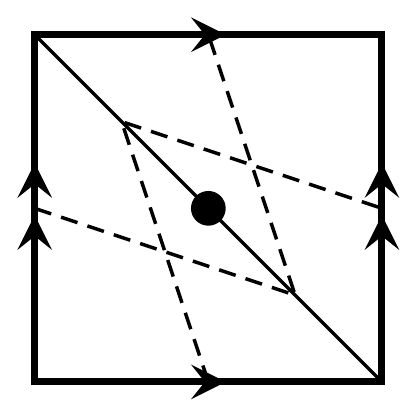}}
  \qquad\qquad
  (b)
  \raisebox{-40pt}{\includegraphics[scale=.6]{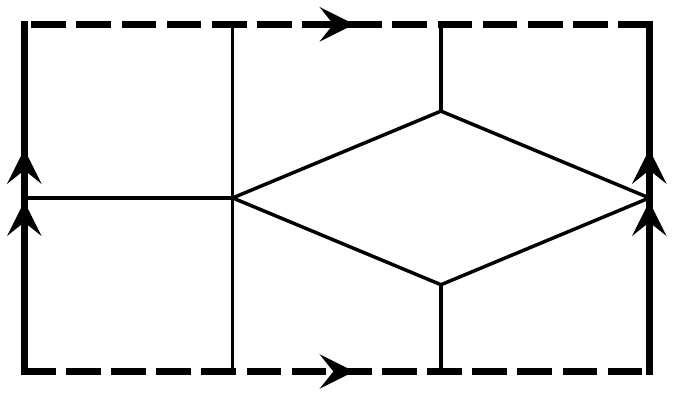}}
\caption[Quadrangulations of the torus]
{(a)~A $2,\!6$-quadrangulation of the torus with two vertices,
and its refinement with eight vertices.
(b)~A $3^25^2$-quadrangulation of the torus with seven vertices.
Erasing the dashed edges would give
a smaller example containing only the four exceptional vertices.
}
\label{fig:quadrs}
\end{figure}
Finally, one can consider hexangulations of the torus, with average
vertex degree~$3$. In this case, the corresponding result takes a
different form. Hexangulations with two irregular vertices of
degree~$2$ and~$4$ exist, as shown in Figure~\ref{fig:hexes}.
\begin{figure}
  \centering%
  (a)
  \raisebox{-40pt}{\includegraphics[scale=.6]{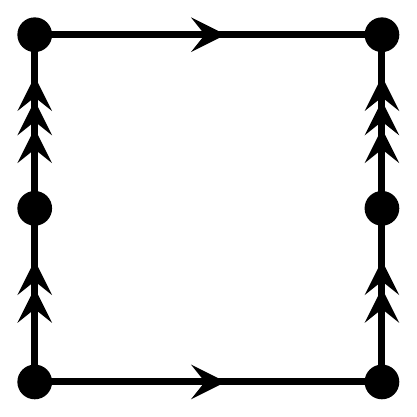}}\hspace{1em}
  (b)
  \raisebox{-40pt}{\includegraphics[scale=.6]{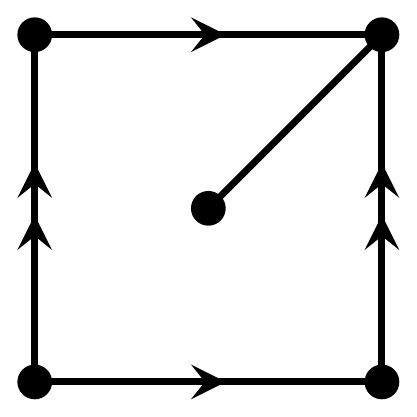}}\hspace{1em}
  (c)
  \raisebox{-40pt}{\includegraphics[scale=.6]{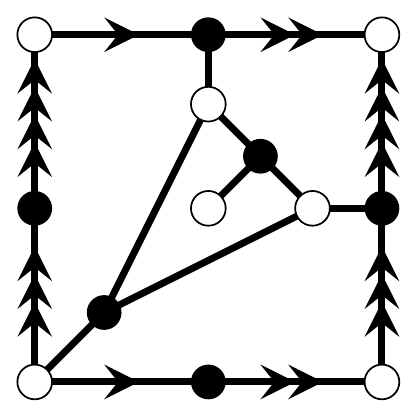}}\hspace{1em}
  (d)\!
  \raisebox{-40pt}{\includegraphics[scale=.35]{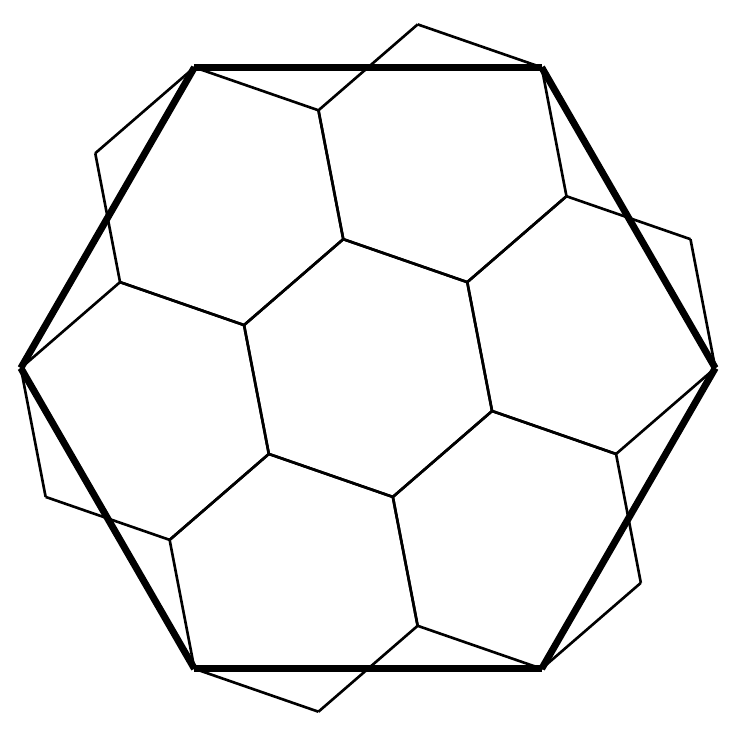}}
\caption[Hexangulations of the torus]
{(a)~A $2,\!4$-hexangulation of the torus. 
The $1$-skeleton fails to be bipartite because it has a loop edge.
(b)~A $1,\!5$-hexangulation.
(c)~A $1,\!5$-hexangulation with bipartite $1$-skeleton.
(d)~A subdivision scheme for hexangulations, which can be used to
generate bigger examples. 
}\label{fig:hexes}
\end{figure}
But any such hexangulation has an odd
edge-cycle:

\begin{thm}
  \label{thm:24}
  The vertices of a $2,\!4$-hexangulation of the torus cannot be
  bicolored (that is, the $1$-skeleton is not bipartite).
\end{thm}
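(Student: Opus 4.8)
The plan is to pass from combinatorics to geometry by realising the hexangulation as a euclidean cone torus. Given a $2,\!4$-hexangulation, metrise each hexagonal face as a regular euclidean hexagon of unit side. The degree-$3$ vertices then have cone angle $3\cdot120^\circ=360^\circ$ and are smooth, while the degree-$2$ and degree-$4$ vertices become cone points $p_1,p_2$ of cone angles $240^\circ$ and $480^\circ$, i.e.\ of curvatures $+120^\circ$ and $-120^\circ$. Away from $p_1,p_2$ this flat structure is an $(\mathrm{Isom}^+(\R^2),\R^2)$-structure whose developing map sends every chart to the standard regular honeycomb $H\subset\R^2$; consequently the holonomy representation $\rho\colon\pi_1(T\setm\{p_1,p_2\})\to\mathrm{Isom}^+(\R^2)$ takes values in the symmetry group $\mathrm{Sym}^+(H)$, so every linear holonomy is a rotation by a multiple of $60^\circ$.

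Next I would translate bipartiteness into a holonomy condition. The honeycomb $H$ is bipartite and, being connected, carries a $2$-colouring $\chi_0$ unique up to swapping the colours. An element $g\in\mathrm{Sym}^+(H)$ either preserves or interchanges the two colour classes, and one checks on generators (translations, a $60^\circ$-rotation about a face centre, a $120^\circ$-rotation about a vertex) that it interchanges them exactly when its rotational part is an \emph{odd} multiple of $60^\circ$; thus $\epsilon(g):=(\text{rotation angle of }g)/60^\circ \bmod 2$ is a homomorphism $\mathrm{Sym}^+(H)\to\Z/2$ detecting the swap. A global $2$-colouring of the hexangulation is the same as a $\rho$-invariant colouring of $H$, which exists iff $\epsilon\circ\rho\equiv0$. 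Around either cone point the linear holonomy is a rotation by $\pm120^\circ$, so $\epsilon(\rho(x_i))=0$ and the colouring always extends across $p_1,p_2$ (each hexagonal face at a cone point forces the two neighbours it contains to the opposite colour, consistently). Hence the $1$-skeleton is bipartite iff $\epsilon(\rho(a))=\epsilon(\rho(b))=0$ for the two handle generators $a,b$ — equivalently, tracking the $\pm60^\circ$ turning along an edge-cycle, iff every homologically essential cycle has even length, i.e.\ iff the class $[w]\in H^1(T;\Z/2)$ dual to ``all edges'' vanishes.

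As a first, purely combinatorial reduction I would record a constraint on the colours of the exceptional vertices. If the skeleton were bipartite with classes $B,W$ of sizes $n_B,n_W$, counting edge-endpoints by colour gives $\sum_{v\in B}\deg v=\sum_{v\in W}\deg v$. Were the degree-$2$ and degree-$4$ vertices oppositely coloured, this would read $3n_B-1=3n_W+1$, i.e.\ $3(n_B-n_W)=2$, which is impossible; so in any bipartite $2,\!4$-hexangulation the two exceptional vertices share a colour. This removes the obvious obstruction but is not yet a contradiction.

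The main obstacle is therefore to show that at least one handle holonomy is a rotation by an \emph{odd} multiple of $60^\circ$, equivalently that an essential odd cycle is unavoidable. This is invisible to the cone angles alone: both cone-point holonomies are $\epsilon$-even, and $1,\!5$-hexangulations, whose cone-point holonomies are the same rotations by $\pm120^\circ$, can be bipartite (Figure~\ref{fig:hexes}(c)). The handle holonomies are genuine moduli of the particular metric, so I would appeal to the paper's holonomy theorem for the needed global control. Its conformal proof encodes the curvatures in a meromorphic $1$-form $\omega=2\,\partial u$ on the underlying elliptic curve, with simple poles of residues $\pm\tfrac13$ at $p_1,p_2$ (here $u$ is the conformal factor, and single-valuedness of $u$ forces all periods of $\omega$ to be purely imaginary). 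Expressing the handle holonomies through the imaginary parts of the $\lambda_1,\lambda_2$-periods of $\omega$ and applying the residue theorem should determine them modulo $120^\circ$; the heart of the matter is to extract from the residue value $\tfrac13$ the odd $60^\circ$-parity that fails for the residue $\tfrac23$ of the $1,\!5$ case, yielding $\epsilon\circ\rho\not\equiv0$ and hence non-bipartiteness.
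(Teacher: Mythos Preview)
Your overall strategy coincides with the paper's: equip the hexangulation with its equilateral metric (cone curvatures $\pm 2\pi/3$), translate bipartiteness into a holonomy condition, and invoke the Holonomy Theorem. Your criterion $\epsilon\circ\rho\equiv 0$ is exactly Lemma~\ref{lem:small-hol} (bipartite $\Leftrightarrow H\le C_3$), and the colour-counting paragraph, while correct, is superfluous. The paper then finishes in one line: the Holonomy Theorem with $n=3$ gives $C_3\lneqq H$, contradicting $H\le C_3$.

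The gap is your final paragraph. Having correctly reduced to the Holonomy Theorem, you do not simply apply it with $n=3$ but try to re-derive it through the $1$-form $\omega=2\,\partial u$. This is \emph{not} the paper's conformal proof, which instead shows that if $H=C_n$ then $(dw)^n$ descends to a meromorphic degree-$n$ differential on~$M$, making $\sum k_jp_j$ a principal divisor, and then invokes the fact that no elliptic function has a single simple pole. Your $\omega$ does encode rotational holonomy (its period along a loop is $i$ times the rotation angle, and your observation that the periods are purely imaginary is correct), but the residue theorem applied to $\omega$ yields only $\tfrac13-\tfrac13=0$; it places no constraint whatsoever on the handle periods, which are moduli of the metric. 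The phrases ``should determine them modulo $120^\circ$'' and ``the heart of the matter is to extract\ldots'' are placeholders, not arguments, and there is in fact no way to extract the needed parity from residues of a $1$-form alone. To close the argument you must use one of the paper's two proofs of the Holonomy Theorem---either the metric one (a shortest noncontractible geodesic through $p_-$ and its parallel translates) or the conformal one via principal divisors and the nonexistence of an elliptic function with one simple pole.
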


Similarly, $4,\!8$-triangulations and $2,\!6$-quadrangulations are subject
to combinatorial restrictions:

\begin{thm}
  \label{thm:48triang}
  The faces of a $4,\!8$-triangulation of the torus cannot be
  $2$-colored (that is, the dual graph is not bipartite).
\end{thm}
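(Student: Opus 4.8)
The plan is to convert the purely combinatorial statement into a question about the rotational holonomy of the associated flat cone metric, and then to let the Holonomy Theorem do the work. First I would put on the $4,\!8$-triangulation the euclidean cone metric in which every triangle is a unit equilateral triangle. At a vertex of degree $d$ the cone angle is $d\pi/3$, so the degree-$4$ vertex becomes a cone point of curvature $2\pi-4\pi/3=2\pi/3$ and the degree-$8$ vertex a cone point of curvature $-2\pi/3$, while every degree-$6$ vertex is regular. This is a euclidean cone metric on the torus with exactly two cone points $p_1,p_2$ of curvature $\pm 2\pi/3$ — precisely the holonomy data of the $2,\!4$-hexangulation in Theorem~\ref{thm:24}, which is why I expect one computation to yield both results.

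The core of the reduction is to read a face $2$-coloring off the developing map. Developing the metric into the plane carries each triangle to a triangle of the standard equilateral lattice, and every such triangle has one of two \emph{types}, ``up'' ($\triangle$) or ``down'' ($\nabla$); edge-adjacent triangles always get opposite types, and a rotation by $k\pi/3$ preserves the type exactly when $k$ is even. Hence the up/down type descends to a globally defined proper $2$-coloring of the faces if and only if the rotational holonomy of every loop is a rotation by an even multiple of $\pi/3$, i.e.\ lies in the index-$2$ subgroup $\langle\text{rotation by }2\pi/3\rangle$ of the full rotational holonomy group $\langle\text{rotation by }\pi/3\rangle\cong\Z/6$. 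Conversely, on any simply connected developed patch the only proper $2$-colorings of the triangles are the up/down coloring and its swap, so a combinatorial face $2$-coloring must agree locally with the type and therefore forces the holonomy into this subgroup. I would record this by reducing the rotational holonomy modulo the index-$2$ subgroup: this gives a homomorphism $H_1(T^2\setminus\{p_1,p_2\};\Z)\to\Z/2$ which, because each cone point has curvature $\pm 2\pi/3$ (an even multiple of $\pi/3$), kills both puncture loops and hence defines a class $\epsilon\in H^1(T^2;\Z/2)$. The faces are $2$-colorable precisely when $\epsilon=0$. The local hypotheses are automatic here: every vertex of a $4,\!8$-triangulation has even degree, so the faces around each vertex, in particular around each cone point, already alternate consistently.

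It then remains to show $\epsilon\neq0$, and this is exactly the content I would extract from the Holonomy Theorem: for a euclidean cone metric on the torus with two cone points of curvature $\pm 2\pi/3$, the rotational holonomy is \emph{not} contained in the index-$2$ subgroup, equivalently some generator of $\pi_1(T^2)$ has rotational holonomy an odd multiple of $\pi/3$. Feeding this into the previous paragraph gives $\epsilon\neq0$, so no face $2$-coloring exists and the dual graph is not bipartite. The identical input, read through the bipartition of the honeycomb (its two sublattices are interchanged by a rotation through $\pi/3$ and preserved by $2\pi/3$), simultaneously yields Theorem~\ref{thm:24}.

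The genuine obstacle is the nonvanishing of $\epsilon$, not the reduction, which is local and essentially formal once the up/down dictionary is in place. What carries all the weight is proving that the holonomy around the two torus generators cannot conspire to remain in the even subgroup. I would attack this with the residue-theorem argument advertised in the abstract: writing $\eta=d\log dw$ for the multivalued developing map $w$ produces a meromorphic $1$-form on the conformal torus $\C/\Lambda$ with simple poles of residues $\mp1/3$ at $p_1,p_2$ and no other poles, whose periods over the generators $a,b$ are purely imaginary multiples of $\pi/3$; the parities of these periods are precisely $\epsilon$, since $\exp(3\oint_\gamma\eta)=(-1)^{\epsilon(\gamma)}$. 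Thus $\exp\!\big(3\textstyle\int\eta\big)$ would be, when $\epsilon=0$, a genuine elliptic function with divisor $p_2-p_1$ supported on two \emph{distinct} points, which Abel's theorem forbids. Turning this obstruction into the clean parity statement $\epsilon\neq0$ — and, en route, proving the full Holonomy Theorem — is the crux of the whole argument.
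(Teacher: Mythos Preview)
Your proposal is correct and follows essentially the same route as the paper: the equivalence ``faces $2$-colorable $\Leftrightarrow$ holonomy contained in $C_3$'' is exactly the triangulation case of Lemma~\ref{lem:small-hol} (the paper even remarks that this can be phrased via the developing map, as you do with the up/down types), and your obstruction $\epsilon\neq 0$ is precisely the conclusion $C_3\lneqq H$ of the Holonomy Theorem applied with $n=3$. Your conformal sketch via $\eta=d\log dw$ and Abel's theorem is just a logarithmic repackaging of the paper's Section~\ref{sec:conformal_proof} argument, since $\exp\!\big(n\!\int\eta\big)=(dw/dz)^n$ is exactly the elliptic function the paper builds from $(dw)^n/(dz)^n$.
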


\begin{thm}
  \label{thm:26quad}
  The edges of a $2,\!6$-quadrangulation of the torus cannot be
  bicolored with colors alternating around each face
(or equivalently, around each vertex).
\end{thm}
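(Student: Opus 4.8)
The plan is to realize the hypothetical coloring as a geometric structure and then kill it with the residue theorem, which is the conformal route advertised in the abstract. First I would put a flat cone metric on the torus by declaring every quadrilateral face to be a unit euclidean square and gluing along edges by isometries. Since a vertex of degree $d$ is surrounded by $d$ right angles, the regular (degree-$4$) vertices become smooth points, while the degree-$2$ and degree-$6$ vertices become cone points of cone angle $\pi$ and $3\pi$, i.e.\ of curvature $+\pi$ and $-\pi$ (summing to $0$, as Gauss--Bonnet demands for the torus). This yields a euclidean cone torus with exactly two cone points, the setting of the Holonomy Theorem.

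Next I would translate the combinatorial hypothesis into a statement about this metric. Suppose the edges are bicolored so that colors alternate around every face. Then in each square the two ``red'' sides are opposite, as are the two ``blue'' sides, and an edge keeps its color as we pass into the square across it. Hence the red edges assemble into a globally consistent \emph{horizontal} line field and the blue edges into a transverse \emph{vertical} one; equivalently, the unit-square metric becomes a \emph{half-translation} structure, whose linear holonomy lies in $\{\pm1\}$ and which carries a single-valued meromorphic quadratic differential $q$ on the induced conformal torus. The local models at the cone points are forced: a cone angle $k\pi$ is a zero (or pole) of $q$ of order $k-2$, so the degree-$2$ vertex is a \emph{simple pole} of $q$ and the degree-$6$ vertex a \emph{simple zero}, with $q$ holomorphic and nonvanishing elsewhere. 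As a check, a singularity of order $m$ has $m+2$ horizontal prongs, giving $1$ and $3$ red edges at the two special vertices, matching their degrees.

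The contradiction then comes from the residue theorem. The conformal torus is $\C/\Lambda$, whose canonical bundle is trivial, so I may write $q=f\,(dz)^2$ and form the meromorphic $1$-form $\eta:=q/dz=f\,dz$, which is globally well defined because $dz$ is a nowhere-zero holomorphic $1$-form. By the previous step $\eta$ has a single simple pole, at the degree-$2$ vertex, with nonzero residue (essentially the \emph{Burgers vector}, the translational holonomy around that cone point), and no other poles. But the sum of the residues of a meromorphic $1$-form on a compact Riemann surface is $0$, which with only one nonzero residue is absurd. Hence no such coloring exists.

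The main obstacle I expect is the middle step: proving cleanly that a face-alternating $2$-coloring is \emph{exactly} the data of a global pair of transverse foliations, i.e.\ that it reduces the linear holonomy to $\{\pm1\}$ and so produces an honest single-valued quadratic differential with the stated singularities. One must verify that the color of an edge is unambiguous even across the cone points, where the foliations are singular, and that no degree-$4$ vertex secretly forces a quarter-turn. Everything else (the cone-angle bookkeeping and the residue computation) is routine. I would also remark that this is precisely the $\{\pm1\}$-holonomy case of the Holonomy Theorem: for a general $2,\!6$-quadrangulation the linear holonomy lies in $\langle i\rangle$ and a quarter-turn, swapping red and blue, appears along some loop of the torus, which is exactly what obstructs the coloring and what the impossibility of the quadratic differential detects.
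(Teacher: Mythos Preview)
Your proposal is correct and is essentially the paper's conformal proof (Section~\ref{sec:conformal_proof}) specialized to $n=2$ and rephrased in the language of half-translation surfaces: your quadratic differential $q$ is exactly the paper's $(dw)^{2}$, and your reduction of the linear holonomy to $\{\pm1\}$ via the alternating edge-coloring is the content of Lemma~\ref{lem:small-hol}. The only cosmetic difference is that you apply the residue theorem to the $1$-form $q/dz$ (with its simple pole at the degree-$2$ vertex), whereas the paper applies it to the elliptic function $(dz)^{2}/q$ (with its simple pole at the degree-$6$ vertex); on the torus these are equivalent since $dz$ trivializes the canonical bundle.
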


We prove Theorems \ref{thm:57}--\ref{thm:26quad} by converting the
statements about combinatorics to statements about geometry
(Sections~\ref{sec:cone_metrics} and~\ref{sec:holonomy}).
Namely, any triangulated torus has a natural \emph{equilateral} metric
obtained by declaring each edge to have length~$1$ and each triangle
to be a euclidean equilateral triangle.  This approach was also used
by Thurston~\cite{Thu98} to classify triangulations of the sphere with
vertex degrees at most~$6$.  In our case, if there were a
$5,\!7$-triangulation of the torus, its equilateral metric would be a
euclidean metric with exactly two cone singularities
(Section~\ref{sec:cone_metrics}).  We obtain a contradiction by
studying the possible holonomy groups of such metrics
(Section~\ref{sec:holonomy}).
Specifically, we prove the following theorem.

\begin{holthm}
  Suppose the torus is equipped with a euclidean cone metric with
  exactly two cone points $p_{\pm}$ of curvature $\pm 2\pi/n$, for
  some integer $n\geq 2$. Then the holonomy group $H$ contains the cyclic
  group $C_{n}$ of order $n$ as a proper subgroup: $C_{n}\lneqq H$.
\end{holthm}

We provide two proofs for the Holonomy Theorem. One argument is
metric in nature (Section~\ref{sec:metric_proof}), the other relies
on the induced conformal structure and proceeds by invoking the
residue theorem (Section~\ref{sec:conformal_proof}).

\foreignlanguage{slovak}{Jendro^l \& Jucovi^c}~\cite{JeJu} actually
prove a stronger statement than Theorem~\ref{thm:57}. They also show
that every distribution of irregular degrees except $5,7$ does indeed
occur in some triangulation as long as the average is $6$. (Their
notion of triangulation is more restrictive. See
Section~\ref{sec:maps} for the precise statements.) Barnette,
\foreignlanguage{slovak}{Jucovi^c} \& Trenkler~\cite{BJT} prove an
analogous existence result as well. These existence proofs involve
more or less explicit constructions and are rather different in nature
from the non-existence proofs. In this article, we do not deal with
such existence statements at all. Neither are we concerned with the
realization of tori as polyhedral surfaces in $\R^{3}$ (as considered,
for instance, in \cite{GrSz} and the references therein).

We have phrased 
Theorems~\ref{thm:57}--\ref{thm:24}
in terms of triangulations, quadrangulations
and hexangulations with some exceptional vertices.  One could reformulate
these results in dual terms:  For example, dual to a triangulation is
a map where the vertices all have degree three.  Hexagon faces would
then be considered regular, while pentagons and heptagons would be exceptional.
In fact, Theorem~\ref{thm:57} was
originally stated in that dual form. We prefer our formulation
because it is more closely connected to the euclidean cone metric we use. 

\section{Maps on surfaces}

\label{sec:maps}

Suppose $M$ is a closed 
connected
surface.  For us, a \emph{map} on~$M$ will
mean an embedding of a finite graph~$G$
in~$M$ such that each face is topologically an open disk.  Here, a \emph{face} 
is a component of the complement $M\setm G$, 
and a \emph{graph} is a one-dimensional cell complex. In particular, a
graph can have loops and multiple edges.
This notion of map corresponds to the construction
of surfaces by starting with a collection of
(topological) polygons (including $1$- and $2$-gons)
and gluing their edges in pairs.  Any map has a combinatorial dual,
with a vertex for every face and vice versa.
A map on $M$ is called a \emph{triangulation} (\emph{quadrangulation},
\emph{hexangulation}) of $M$ if all faces are triangles
(quadrilaterals, hexagons).

Some authors put more restrictions on the combinatorics of a map.
Suppose (a)~the 
graph~$G$ has no loops or multiple edges
and (b)~the boundary of each face is an embedded circle
and (c)~any two faces meet either along a single edge or
at a single vertex or not at all.  Then we will call the map
\emph{polyhedral}.
In a polyhedral map,
each vertex has degree $d\ge 3$ and each face has $k\ge 3$ sides.
A polyhedral triangulation is a simplicial complex.  For a map on the
sphere, the following statements are equivalent~\cite[Section~13.1]{Gru}:
(i)~the map is polyhedral; (ii)~the map can be realized by a
convex polyhedron; (iii)~the graph is $3$-connected;
(iv)~the dual map is polyhedral.

Consider a map with $V$ vertices, $E$ edges, and $F$ faces on a
surface with Euler characteristic~$\chi$. Let $v_{k}$ be the
number of vertices with degree $k$ and let $p_{k}$ be the number of
$k$-gons among the faces. The Euler equation $\chi = V-E+F$ (where
of course
$V=\sum_{k} v_k$ and $F=\sum_k p_k$)
and the double counting formula
$\sum_{k}kv_{k}=\sum_{k}kp_{k}=2E$ imply relations between the
numbers $v_{k}$ and $p_{k}$. If all faces are $n$-gons, one obtains
\begin{equation}
  \label{eq:degree_sequence_n-gons}
  \sum_{k}(\bar{n}-k)v_{k}=\bar{n}\chi,
\end{equation}
where $\bar{n}:=2n/(n-2)$. In the case $\chi=0$, we see 
the average vertex degree is exactly~$\bar{n}$.
(For any value of~$\chi$, the average vertex degree for large maps
with $n$-gonal faces approaches~$\bar n$.)

Note that $\bar n$ is an integer for $n=3,4,6$; then
$v_{\bar n}$ does not appear in equation~\eqref{eq:degree_sequence_n-gons}.
That is, the Euler relation imposes no restictions on $v_{\bar n}$
in these cases.
For polyhedral
triangulations ($n=3$, $\bar{n}=6$) of the sphere,
equation~\eqref{eq:degree_sequence_n-gons} is the only
relation among the $v_{k}$ for $k\not=6$:

\begin{thm}[Eberhard~\cite{Ebe}]  
  Suppose $(v_{3},v_{4},v_{5};v_{7},v_{8},\ldots)$ is a sequence of
  nonnegative integers with finitely many nonzero terms satisfying
  $\sum(6-k)v_{k}=12$. Then for some $v_6\ge0$ there exists a
  polyhedral triangulation of the
  sphere with $v_{k}$ vertices of degree $k$ for all $k$.
\end{thm}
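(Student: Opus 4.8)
The plan is to translate the degree condition into discrete curvature and then realize the prescribed exceptional vertices as defects of the standard triangular lattice. Assign to a vertex of degree~$k$ the curvature $6-k$, measured in units of $2\pi/6=\pi/3$; the hypothesis $\sum_k(6-k)v_k=12$ then says the total curvature equals $4\pi$, the value forced for the sphere by the Euler relation~\eqref{eq:degree_sequence_n-gons} with $\chi=2$. A vertex of degree~$6$ is flat, so degree-$6$ vertices tile arbitrarily large flat regions using the ordinary triangular lattice and serve as free ``padding''; their number $v_6$ is precisely the quantity we are permitted to choose. Dually this is Eberhard's theorem on simple $3$-polytopes with a prescribed number of $k$-gonal faces, but I would argue directly on the triangulation.

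First I would record the elementary lattice defects. A vertex of degree $k<6$ is a positive disclination of curvature $6-k$, one of degree $k>6$ is a negative disclination, and an adjacent degree-$5$/degree-$7$ pair is a \emph{dislocation}, carrying a Burgers vector but no net curvature. The two local facts I need are: (i)~a single $(6-k)$-disclination can be inserted into a sufficiently large flat triangular-lattice patch while keeping the complex simplicial; and (ii)~a dislocation can be inserted anywhere in such a patch, changing $v_5$ and $v_7$ each by $+1$ and affecting nothing else. Each such insertion is charge-neutral exactly when the total curvature stays~$12$.

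With these building blocks I would assemble the sphere as the union of one defect patch and one flat cap. Into disjoint flat regions of a large triangulated disk I place exactly one $(6-k)$-disclination for each prescribed exceptional vertex of degree $k\neq6$, filling the remainder with degree-$6$ vertices. Since the targets satisfy $\sum_k(6-k)v_k=12$, this patch carries all $4\pi$ of the sphere's curvature. The complementary region is to be a flat cap built entirely from degree-$6$ vertices, glued to the patch along a common boundary cycle, so that the closed surface is a sphere whose only exceptional vertices are the prescribed ones. Choosing the patch large keeps the defects far apart, which rules out multiple edges and forces each pair of triangles to meet in at most an edge; the complex is then polyhedral, and the free parameter $v_6$ absorbs all the padding.

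The hard part is the global closing-up. After inserting the prescribed defects one must cap off the patch so that the result is genuinely a sphere carrying \emph{only} the prescribed exceptional vertices and no spurious ones along the seam. A generic cap introduces extra degree-$5$ and degree-$7$ vertices at the boundary, so the real work is to control the combinatorial ``shape'' of the patch boundary---its turning and the net Burgers vector accumulated by the inserted dislocations---so that it matches a boundary bounding a defect-free hexagonal cap, with every seam vertex ending up of degree exactly~$6$. Managing this matching, while keeping the complex simplicial and polyhedral, is where the genuine difficulty lies: the curvature count guarantees that a solution exists in principle, but producing explicit compatible boundaries---equivalently, routing balancing negative curvature to each positive disclination through the hexagonal padding via dislocations---is the crux of the argument.
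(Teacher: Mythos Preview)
The paper does not prove Eberhard's theorem; it is quoted as background, and the reader is referred to \cite[Section~13.3]{Gru} for a proof. There is thus no ``paper's own proof'' to compare against here.

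Your proposal is a plausible geometric program, but it is not a proof, and you essentially say so yourself. The step you label ``the hard part''---capping the defect patch by a flat disk so that every seam vertex ends up with degree exactly~$6$---is the entire content of the theorem, and you leave it undone. The sentence ``the curvature count guarantees that a solution exists in principle'' is circular: the identity $\sum_k(6-k)v_k=12$ is just the Euler relation~\eqref{eq:degree_sequence_n-gons}, a \emph{necessary} condition; that it is also \emph{sufficient} for some~$v_6$ is precisely Eberhard's theorem, so you are assuming what you set out to prove. Concretely, your building blocks (i) and (ii) are asserted rather than established, and even granting them, the boundary-matching problem is real. A triangulated disk whose interior vertices all have degree~$6$ has a boundary word tightly constrained by the combinatorics of the triangular lattice, and there is no a priori reason the boundary of a patch carrying an arbitrary prescribed mix of positive and negative disclinations can be brought to coincide with such a word without creating new exceptional vertices. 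The standard proofs proceed by explicit inductive constructions (reducing to small base cases and then enlarging) rather than a single patch-and-cap assembly, exactly because this boundary control is delicate; your outline does not supply it.
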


The theorem is usually phrased in dual terms as a theorem about polyhedra with
vertices of degree $3$ and prescribed number of $k$-gons for $k\not=6$;
a proof can be found in \cite[Section~13.3]{Gru}.
In some cases, the possible values of~$v_6$ are known exactly.
(See \cite[Section~13.4]{Gru} and also \cite[Section~6]{Thu98}.)
For instance, a $5^{12}$-triangulation of the sphere exists
exactly for $v_6\ne 1$; a $3^4$-triangulation exists exactly
for $v_6$ even and not equal to~$2$.

\foreignlanguage{slovak}{Jendro^l \& Jucovi^c} proved an
analogous theorem for the torus and found that one exceptional case
has to be excluded:

\begin{thm}[\foreignlanguage{slovak}{Jendro^l \& Jucovi^c}~\cite{JeJu}]
  Suppose $p=(p_{3},p_{4},p_{5};p_{7},p_{8},\ldots)$ is a sequence of
  nonnegative integers with finitely many nonzero terms satisfying
  $\sum(6-k)p_{k}=0$. Then there
  exists a nonnegative integer $p_{6}$ and a map on the torus with
  $3$-connected $3$-valent graph having $p_{k}$ $k$-gons
  if and only if $p\not=(0,0,1;1,0,\ldots).$
\end{thm}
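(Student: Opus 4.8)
The plan is to treat the biconditional under its standing hypothesis $\sum_k (6-k)p_k = 0$, which is exactly the relation Euler's formula forces on any $3$-valent map of the torus: from $3V = 2E$, $\sum_k k\,p_k = 2E$, $F = \sum_k p_k$ and $\chi = 0$ one eliminates $V$ and $E$ to obtain $\sum_k(k-6)p_k = 0$, the dual of \eqref{eq:degree_sequence_n-gons} for $n=3$. It then remains to show that, among sequences obeying this relation, a realizing map exists precisely when $p \neq (0,0,1;1,0,\ldots)$. I would split this into the impossibility of the single excluded sequence and the construction of a map for every other one.

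The excluded case is immediate from the non-existence results already in hand. The dual of a map with $3$-valent graph is a triangulation in which each $k$-gon becomes a vertex of degree $k$; hence $p=(0,0,1;1,0,\ldots)$ would dualize to a triangulation of the torus with exactly one vertex of degree $5$, one of degree $7$, and all others of degree~$6$. Theorem~\ref{thm:57} forbids every such $5,\!7$-triangulation --- \emph{a fortiori} the ones whose graph is $3$-connected --- so this sequence is genuinely not realizable. The remaining task is to exhibit, for every other admissible $p$, a value of $p_6$ and an actual map whose graph is $3$-valent and $3$-connected; this is the substantive direction, and the one the present article does not pursue.

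For existence I would argue by explicit surgery on a regular triangulation and then dualize (so that the vertex degrees $v_k$ of the triangulation become the face sizes $p_k$ of the map, with $v_k = p_k$). Starting from a large regular all-degree-$6$ triangulation --- a big cover of Figure~\ref{fig:reg-examples}(a) --- a single edge flip produces a $5^27^2$-configuration, i.e. two elementary $5,\!7$ dipoles (Figure~\ref{fig:examples57}(a)). These dipoles can be transported by further flips along \emph{glide lines} through the hexagonal lattice and then combined: coalescing defects raises the charge at a vertex, turning a pair of degree-$5$ vertices into a degree-$4$ vertex, a pair of degree-$7$ vertices into a degree-$8$ vertex, and so on, so that any prescribed multiset of non-hexagonal degrees with total charge $\sum_k(6-k)v_k = 0$ can be assembled, while surplus degree-$6$ vertices are absorbed into the free parameter $p_6$. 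Keeping the patches well separated and the underlying triangulation fine should guarantee that the resulting graph stays simple and $3$-connected, hence polyhedral, so that its dual is the desired $3$-valent $3$-connected map.

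The main obstacle is precisely the borderline between the realizable sequences and the single forbidden one, and this is where the geometry of the article enters. Every elementary $5,\!7$ dipole carries a nontrivial Burgers vector --- equivalently, a nontrivial rotational holonomy of the associated equilateral cone metric --- so a lone $5,\!7$ pair cannot close up on the torus; this is the content of the Holonomy Theorem and of Theorem~\ref{thm:57}. For $p_5 = p_7 = m$ with $m \ge 2$, however, two or more dipoles can be placed with cancelling Burgers vectors, and any sequence involving a face of degree $\le 4$ or $\ge 8$ carries enough internal structure to be built without ever being forced into the isolated $5,\!7$ configuration. Making this dichotomy precise --- verifying that the constructed patches glue into a bona fide $3$-connected triangulation in every remaining case, and that $m=1$ with no other defects is the unique exception --- is the delicate combinatorial heart of the argument.
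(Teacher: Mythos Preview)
Your treatment of the non-existence direction is exactly what the paper does: it states explicitly that ``We formulated the non-existence statement for the exceptional case in dual form as Theorem~\ref{thm:57},'' and proves Theorem~\ref{thm:57} via the Holonomy Theorem. Dualizing a $3$-valent torus map with face vector $(0,0,1;1,0,\ldots)$ to a $5,\!7$-triangulation and invoking Theorem~\ref{thm:57} is precisely the intended route.

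For the existence direction you go well beyond the paper, which explicitly declines to prove it: ``These existence proofs involve more or less explicit constructions and are rather different in nature from the non-existence proofs. In this article, we do not deal with such existence statements at all.'' The paper simply cites \cite{JeJu} for that half. So there is no paper proof to compare against here; your surgery sketch is an independent proposal.

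As a sketch it is plausible in outline but, as you yourself concede, not a proof. Two places where it is genuinely incomplete rather than merely informal: (i)~the ``coalescing'' step --- turning two degree-$5$ vertices into a single degree-$4$, or two degree-$7$'s into a degree-$8$ --- is not a single edge flip, and you have not specified a local move that does this while keeping the complex a triangulation; the curvatures add correctly, but the combinatorics needs an explicit construction. (ii)~The $3$-connectivity of the resulting $3$-valent dual map is asserted on the grounds that patches are ``well separated,'' but this is exactly the kind of global condition that can fail silently in torus constructions and is the reason \cite{JeJu} proceeds by careful case analysis rather than a uniform defect-transport argument. If you want a self-contained proof of the full biconditional you will need either to supply these constructions or to follow the explicit case-by-case constructions of \foreignlanguage{slovak}{Jendro^l} and \foreignlanguage{slovak}{Jucovi^c}.
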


We formulated the non-existence statement for the exceptional case in
dual form as Theorem~\ref{thm:57}.
\foreignlanguage{slovak}{Jendro^l \& Jucovi^c} only
consider $3$-connected graphs, but they do not
seem to use this assumption in their non-existence proof. In an
earlier paper, Gr{\"u}nbaum had not noticed the exceptional case and
made a wrong claim~\cite[Theorem 3]{Gru68}.

Without the assumption that all faces have the same number of
sides, one obtains the following equation, which is symmetric in $p$ and
$v$:
\begin{equation}
\label{eq:degree_sequence_general}
  \sum_{k}(4-k)p_{k}+\sum_{k}(4-k)v_{k}=4\chi.
\end{equation}
Note that $p_{4}$ and $v_{4}$ do not occur. The double counting
formula implies
$\sum_{k\not=4} kp_k = 2E - 4p_4$ and $\sum_{k\not=4} kv_k = 2E - 4v_4$, so
that
\begin{equation}
  \label{eq:evenness_condition}
  \sum_{k\not=4}kp_{k}\quad\text{and}\quad\sum_{k\not=4}kv_{k}\quad\text{are even.}
\end{equation}

Then it is natural to pose the following question:
  Given $p_{k}$ and $v_{k}$ for $k\not=4$ satisfying
  \eqref{eq:degree_sequence_general} and
  \eqref{eq:evenness_condition}, is there a corresponding map?
For the case of the sphere (and considering only polyhedral maps),
Gr{\"u}nbaum~\cite{Gru69} showed that the answer is yes. In the case
of the torus, only two dual exceptional cases must be excluded:

\begin{thm}[Barnette, \foreignlanguage{slovak}{Jucovi^c} \& Trenkler~\cite{BJT}]
  Suppose two sequences $p=(p_{3};p_{5},p_{6},p_{7}\ldots)$ and
  $v=(v_{3};v_{5},v_{6},v_{7}\ldots)$ of nonnegative
  integers with finitely many nonzero terms are given, satisfying
  \eqref{eq:degree_sequence_general} and~\eqref{eq:evenness_condition}.
  Then there exist $p_4,v_4\ge0$
  and a map on the torus with $3$-connected graph
  having $v_{k}$ vertices of degree~$k$ and $p_{k}$ $k$-gons for each
  $k\geq 3$, if and only if it is not the case that
  $p=(1;1,0,0\ldots)$ and $v=(0;0,\ldots)$ or dually that
  $p=(0;0,\ldots)$ and $v=(1;1,0,0\ldots)$.
\end{thm}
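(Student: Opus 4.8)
The plan is to split the statement into its two halves---the non-realizability of the two exceptional cases, and the realizability of every other admissible pair $(p,v)$---and to handle them by completely different means. For the non-existence, I would reduce everything to Theorem~\ref{thm:35}. The dual exceptional case $p=(0;0,\ldots)$, $v=(1;1,0,0,\ldots)$ (with $p_4,v_4$ free) asks for a map all of whose faces are quadrilaterals, with exactly one vertex of degree~$3$, one of degree~$5$, and all remaining vertices of degree~$4$; this is precisely a $3,\!5$-quadrangulation, which Theorem~\ref{thm:35} forbids for any number of degree-$4$ vertices. The primal exceptional case $p=(1;1,0,0,\ldots)$, $v=(0;0,\ldots)$ is the combinatorial dual of such a map (a degree-$3$ vertex dualizes to a triangle, a degree-$5$ vertex to a pentagon, and degree-$4$ vertices to quadrilaterals), so any realization would dualize to a $3,\!5$-quadrangulation, again contradicting Theorem~\ref{thm:35}. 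This settles both exceptional cases, even without the $3$-connectivity hypothesis.

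For the existence direction I would use an Eberhard-type patching construction adapted to $\chi=0$. Equation~\eqref{eq:degree_sequence_general} with $\chi=0$ says that the total combinatorial curvature $\sum_k(4-k)(p_k+v_k)$ vanishes, so the prescribed exceptional faces and vertices can be organized into pairs consisting of a positive-curvature defect (a triangle or a degree-$3$ vertex) and a negative-curvature one (a pentagon, a degree-$5$ vertex, or a defect of larger magnitude). The building blocks would be local patches sitting inside a background square grid: a gadget creating a single $k$-gon, a gadget creating a single degree-$k$ vertex, and, most importantly, an edge-dislocation gadget introducing a $3$-$5$ pair while leaving a square-grid pattern along its boundary. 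Each gadget boundary is chosen to match the ambient grid so that it can be excised from and re-stitched into a large square-grid torus; the parity condition~\eqref{eq:evenness_condition} is exactly what guarantees these boundaries close up consistently, and the freedom to choose $p_4$ and $v_4$ lets me absorb the regular filler.

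The assembly then uses the topology of the torus in an essential way. A single isolated $3$-$5$ dislocation produces a cone metric with exactly two cone points of curvature $\pm2\pi/4$, and by the Holonomy Theorem (equivalently Theorem~\ref{thm:35}) it cannot close up---its Burgers vector is obstructed. As soon as there is more than one such pair, or a defect of magnitude exceeding one, the hypothesis of the Holonomy Theorem no longer applies, and I can use the two independent homology cycles of the torus to route the dislocation lines joining paired defects so that their Burgers vectors cancel. Once the combinatorial pattern is in place, I would secure $3$-connectivity by taking the background grid large and keeping the gadgets well-separated, removing any small separating cut or multiple edge by standard local modifications (subdivisions and contractions) that alter only $p_4$ and $v_4$.

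I expect the main obstacle to be the bookkeeping of the existence direction rather than any single conceptual step: one must exhibit gadgets for every face size and vertex degree, verify that every admissible $(p,v)$ apart from the two exceptional ones actually decomposes into realizable pieces whose Burgers vectors can be made to cancel, and maintain $3$-connectivity throughout---the kind of exhaustive case analysis characteristic of Eberhard-type theorems. The delicate point is pinning down the exact boundary of the theorem, namely that the construction fails for precisely the two singleton-dislocation configurations and succeeds in all others; this is exactly where the geometric non-existence supplied by Theorem~\ref{thm:35} meets the combinatorial flexibility afforded by the two homology cycles of the torus.
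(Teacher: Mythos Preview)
The paper does not actually prove this theorem; it merely quotes it from~\cite{BJT}. Immediately after stating it, the authors write that the non-existence half is their Theorem~\ref{thm:35} (with the $3$-connectedness hypothesis removed), and earlier in the introduction they say explicitly: ``In this article, we do not deal with such existence statements at all.'' So there is no proof in the paper to compare your existence argument against.

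For the non-existence direction your reduction is exactly what the paper does: the case $p=(0;0,\ldots)$, $v=(1;1,0,0,\ldots)$ is literally a $3,\!5$-quadrangulation, ruled out by Theorem~\ref{thm:35}, and the case $p=(1;1,0,0,\ldots)$, $v=(0;0,\ldots)$ is its combinatorial dual. This matches the paper's remark verbatim.

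For the existence direction, what you have written is a heuristic programme, not a proof. The pairing of defects into positive/negative curvature pairs already fails in simple cases (one hexagon face, defect $-2$, against two triangles, each defect $+1$; or one degree-$6$ vertex against two degree-$3$ vertices), so the organization step would need to be ``group into balanced clusters'' rather than ``pair''. More seriously, the assertion that ``as soon as there is more than one such pair, or a defect of magnitude exceeding one, \ldots\ I can use the two independent homology cycles of the torus to route the dislocation lines \ldots\ so that their Burgers vectors cancel'' is precisely the content of the theorem and is where all the work lies; you have restated the conclusion, not argued for it. The actual proof in~\cite{BJT} is a lengthy case-by-case construction, and your sketch gives no mechanism for producing the required gadgets or for verifying that every admissible $(p,v)$ other than the two exceptional ones can be assembled. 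Since the paper itself abstains from this direction, your proposal neither agrees nor disagrees with it---but as a standalone proof of the existence half it has a genuine gap.
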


We formulated the non-existence statement
as Theorem~\ref{thm:35}; the assumption of
$3$-connectedness is not necessary.
(Another proof of the existence
statement in the case $v=(0;0,\ldots)$ is due to Zaks~\cite{Zak}.)

\foreignlanguage{slovak}{Jucovi^c} \& Trenkler~\cite{JuTr} also
answered the above question for closed orientable surfaces of genus
$g\geq 2$ (assuming that all faces and vertices have degree at least
$3$). The answer is yes, with no exceptional cases.

\section{Euclidean cone metrics}
\label{sec:cone_metrics}
Let $\omega > 0$ be different from $2\pi$.
A \emph{euclidean cone} of angle~$\omega$ is the metric space
resulting from
gluing together the two edges of a planar wedge of angle~$\omega$
(when $\omega > 2\pi$, we are gluing together several wedges of total angle $\omega$).
A \emph{euclidean cone metric} on a surface~$M$ is one in which
every point has a neighborhood isometric either to an open subset
of the euclidean plane or to a neighborhood of the apex of a euclidean
cone. Points mapped to an apex are called \emph{cone points} and
form a discrete subset of $M$. Each cone point has a \emph{curvature}
$\kappa := 2\pi - \omega$, where $\omega$ is the angle of the
corresponding euclidean cone. The complement of the set of cone
points is denoted by $M^o$ and is clearly locally euclidean.

The metric induced on any polyhedral surface in~$\R^n$
is such a euclidean cone metric.  Indeed, points
along the edges and in the interior of faces have flat neighborhoods,
but a vertex~$v$ is a cone point (unless the angle sum around~$v$
equals $2\pi$).

Suppose we are given a map on a surface~$M$
without $1$- or $2$-sided faces.
This map induces a euclidean cone
metric on~$M$, called the \emph{equilateral metric},
as follows: each edge is a segment of length~$1$,
and each $k$-sided face is isometric to a euclidean regular $k$-gon.
If all faces are $n$-gons, a vertex of degree $k$ has curvature
$2\pi(1 - k/\bar{n})$, where $\bar{n}=2n/(n-2)$ as in
equation~\eqref{eq:degree_sequence_n-gons}.
Thus equation~\eqref{eq:degree_sequence_n-gons}
is exactly the Gauss--Bonnet theorem for the equilateral metric.

In particular, given a triangulation, quadrangulation or hexangulation of a torus, it is only the \emph{exceptional} vertices (those not of degree $\bar{n}$) that become cone points in the equilateral metric.
This observation allows us to give an easy
proof of the classification of (degree-)regular tilings of the torus.  This has
been treated by many authors \cite{Alt73,Neg83,Thomassen,DU06,BrKu},
although most of the results are restricted to the class of polyhedral maps.

\begin{thm}
Any triangulation, quadrangulation or hexangulation of the
torus with no exceptional vertices is a quotient of the
corresponding infinite regular tiling of the plane.  Equivalently,
it is a finite cover of the $1$-vertex triangulation
(Figure~\ref{fig:reg-examples}(a)), the $1$-vertex quadrangulation,
or the $2$-vertex hexangulation (visible in Figure~\ref{fig:reg-examples}(c)),
respectively.
\end{thm}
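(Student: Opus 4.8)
The plan is to translate the purely combinatorial statement into geometry using the equilateral metric, where it becomes a statement about developing maps and lattices in $\R^2$. First I would observe that if the triangulation, quadrangulation, or hexangulation has no exceptional vertices, then by the discussion preceding the theorem every vertex has degree exactly $\bar n$ and hence curvature $2\pi(1-\bar n/\bar n)=0$. Thus the equilateral metric has no cone points at all: it is a genuine, everywhere locally euclidean, flat metric on the torus.

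Next I would exploit completeness. Since the torus is compact, this flat metric is complete, so the developing map $\mathrm{dev}\colon \tilde T \to \R^2$ of the universal cover is a local isometry between complete, simply connected, flat surfaces, and is therefore a global isometry; I identify $\tilde T$ with $\R^2$ accordingly. The deck group $\Lambda$ then acts on $\R^2$ by orientation-preserving euclidean isometries, freely and properly discontinuously with compact quotient. Because a nontrivial euclidean rotation has a fixed point, freeness forces every deck transformation to be a translation. Hence $\Lambda$ is a rank-two translation lattice and $T=\R^2/\Lambda$.

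I would then pull the given map back along the covering $\R^2 \to T$. Since the covering is a local isometry and each face is isometric to a regular $n$-gon, the preimage is an edge-to-edge tiling of the entire plane by unit regular $n$-gons, with exactly $\bar n$ of them meeting around every vertex. The crux of the argument, and the step I expect to be the only real work, is the rigidity claim that such a tiling is unique, namely that it must be the standard regular tiling $\{3,6\}$, $\{4,4\}$, or $\{6,3\}$. This I would establish by local-to-global propagation: the requirement that $\bar n$ regular $n$-gons close up around each vertex with angle sum $2\pi$ determines the star of every vertex up to isometry, and matching stars across shared edges propagates a single normalization across the connected plane, filling it with the regular tiling and nothing else.

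Finally I would identify the quotient. The translations in $\Lambda$ preserve the lifted regular tiling, so $\Lambda$ is contained in the full translation symmetry lattice $\Lambda_0$ of that tiling; as both are cocompact rank-two lattices, $\Lambda$ has finite index in $\Lambda_0$. Therefore the original map is the quotient of the infinite regular tiling by $\Lambda$, which is a finite cover of $\R^2/\Lambda_0$. The minimal quotient $\R^2/\Lambda_0$ is exactly the $1$-vertex triangulation, the $1$-vertex quadrangulation, or the $2$-vertex hexangulation respectively; in the hexagonal case the honeycomb carries two vertices per translation cell, which accounts for the two vertices in the minimal hexangulation, and this completes the proof.
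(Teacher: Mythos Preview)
Your proposal is correct and follows essentially the same route as the paper: pass to the flat equilateral metric, identify the universal cover with $\R^2$ modulo a translation lattice $\Lambda$, pull back the map to the regular tiling, and compare $\Lambda$ with the translation lattice $\Lambda_0$ of that tiling. You supply more detail than the paper does---in particular you spell out why the deck group consists of translations and flag the rigidity of the lifted tiling as needing an argument, whereas the paper simply asserts that the pullback is ``the infinite tiling~$T$ by regular $k$-gons.''
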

\begin{proof}
Since there are no exceptional vertices, the equilateral
metric is a euclidean (flat) metric on the torus~$M$.  Thus
its universal cover is the euclidean plane~$\R^2$, so 
$M$ is the 
quotient by some lattice~$\Lambda$
of translations.  The map on~$M$ pulls back to the cover,
giving the infinite tiling~$T$ by regular $k$-gons.
The translational symmetries of~$T$ form a lattice $\Lambda_0$,
and $T/\Lambda_0$ is the corresponding minimal map on the torus.
Since~$\Lambda$ preserves~$T$, we have $\Lambda<\Lambda_0$.
The covering of $\R^2/\Lambda_0$ by $M=\R^2/\Lambda$ restricts
to a covering of the minimal map $T/\Lambda_0$ by the original map
$T/\Lambda$.
\qed\end{proof}

\section{Holonomy groups and the proofs of Theorems~\ref{thm:57}--\ref{thm:26quad}}
\label{sec:holonomy}

Given a euclidean cone metric on an oriented surface~$M$,
the holonomy group~$H(M)$ is defined as follows.  Fix
a basepoint $x\in M^o$, and consider a loop~$\gamma$
in~$M^o$ based at~$x$.  Parallel transport along~$\gamma$
induces a rotation $h(\gamma)$ of the tangent space~$T_xM$, that is,
an element of $SO_2$ called the holonomy of~$\gamma$.
Because the metric on~$M^o$ is flat,
this holonomy is unchanged if we replace~$\gamma$ by
a homotopic loop.  Thus we get a map $\pi_1(M^o,x)\to SO_2$,
which is independent of the choice of basepoint~$x$ since $SO_2$ is abelian.
Its image is the \emph{holonomy group} $H(M)<SO_2$ of~$M$.

Note that $\pi_1(M^o,x)$ is generated by (generators of) $\pi_1(M,x)$
together with loops around each of the cone points.
A loop around a single cone point of curvature~$\kappa$ has
holonomy~$e^{i\kappa}$. 
We will be particularly interested in the case where the
holonomy group is finite.  The finite subgroups of $SO_2$
are of course the cyclic groups 
$C_n := \langle e^{2\pi i/n}\rangle$.

\begin{lem}\label{lem:holonomy}
Suppose we are given a triangulation, quadrangulation
or hexangulation of a surface~$M$. 
Then the holonomy group of the associated
equilateral metric is a subgroup of~$C_6$ (for triangulations
or hexangulations) or a subgroup of~$C_4$ (for quadrangulations).
\end{lem}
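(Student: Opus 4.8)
The plan is to construct, at each point of $M^o$, a finite set of tangent directions that is invariant under $C_n$ and preserved by parallel transport. Since parallel transport is by rotations, the holonomy must stabilize such a set, and the only rotations stabilizing a $C_n$-orbit are the elements of $C_n$ itself; this forces $H(M)\le C_n$, with $n=6$ for triangulations and hexangulations and $n=4$ for quadrangulations.

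First I would record the elementary fact that makes everything work: the edge directions of a regular polygon form a $C_n$-orbit on the unit tangent circle. In a euclidean equilateral triangle the three edges, each contributing the two opposite unit vectors along it, point in the six directions $\{k\pi/3 : 0\le k<6\}$ up to a common rotation; in a regular hexagon opposite edges are parallel, so its six edges give the same six directions; in a square the four edges give the four directions $\{k\pi/2 : 0\le k<4\}$. In each case the resulting set is exactly an orbit of $C_n$ acting on the unit circle, and an element of $SO_2$ that maps such an orbit to itself is necessarily a rotation by a multiple of $2\pi/n$, i.e.\ an element of $C_n$.

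Next I would assemble these into a parallel field of direction-orbits on $M^o$. At an interior point of a face $F$ assign the $C_n$-orbit $D(F)$ consisting of the edge directions of $F$, transported within the flat face to the point in question; since $F$ carries the euclidean metric, $D(F)$ is covariantly constant on the interior of $F$. The crucial step is to show the field extends across each edge and so stays parallel. If faces $F_1,F_2$ share an edge $e$, then both $D(F_1)$ and $D(F_2)$ contain the tangent direction of $e$; because a $C_n$-orbit is determined by any single one of its elements, and because $e$ is a geodesic whose tangent direction is carried to itself by parallel transport across $e$, the identification of the two tangent spaces across $e$ matches $D(F_1)$ with $D(F_2)$. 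Hence the orbit field is globally defined and parallel on all of $M^o$; note that the vertices, where the angle defect could obstruct such matching, are precisely the cone points removed to form $M^o$.

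Finally I would conclude: for any loop $\gamma$ based at $x\in M^o$, the holonomy $h(\gamma)$ preserves the parallel orbit at $x$, hence stabilizes a $C_n$-orbit and so lies in $C_n$; therefore $H(M)\le C_n$. I expect the only delicate point to be the edge-matching step, which is what rules out large holonomy along the non-contractible generators of $\pi_1(M)$ (these are not controlled by the cone-point loops alone, so the field argument is essential rather than a convenience). As a consistency check one can separately verify that the holonomy $e^{i\kappa}$ of a small loop around an exceptional vertex of degree $k$ has $\kappa=2\pi(1-k/\bar n)$, an integer multiple of $2\pi/n$, so these generating loops already land in $C_n$.
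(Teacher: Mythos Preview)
Your proof is correct and is essentially a repackaging of the paper's argument: the paper tracks the angle between a parallel-transported vector and each edge that the loop crosses, noting that successive such angles differ by a multiple of $\pi/3$ (or $\pi/2$), which is precisely the local form of your global statement that the edge-direction $C_n$-orbit field matches across shared edges and is therefore parallel on $M^o$. The paper even remarks afterward that the argument could alternatively be phrased via the developing map into the regular tiling, and your parallel-field formulation sits naturally between these two equivalent viewpoints.
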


\begin{proof}
Take a basepoint $x \in M^o$ lying on an edge of the map.
Consider parallel translation of a vector~$v$ along a loop~$\gamma$,
which we may assume is transverse to the edges of the map.
Each time $\gamma$ crosses an edge, look at the angle between~$v$
and a vector along that edge. Successive angles will differ
by the angle between two edges of some face of the map.
Since that face is a euclidean regular $k$-gon, the angles differ by
a multiple of $\pi/3$ in the case of triangulations and hexangulations,
and by a multiple of $\pi/2$ in the case of quadrangulations.
Summing these changes, the same is true for the angle between~$v$
and its parallel translate along $\gamma$.
\qed\end{proof}

Stronger statements about the holonomy can be made if the map has
additional combinatorial properties.

\begin{lem}\label{lem:small-hol}
For a hexangulation, the holonomy group is a subgroup of~$C_3$
if and only if the vertices can be $2$-colored
(that is, the $1$-skeleton is bipartite).
For a triangulation, $H<C_3$
if and only if the triangles can be $2$-colored
(that is, the dual graph is bipartite).
For a quadrangulation, $H<C_2$
if and only if the edges can be $2$-colored
(with colors alternating around each face
or, equivalently, around each vertex).
\end{lem}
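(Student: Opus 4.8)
The plan is to reduce all three equivalences to a single statement about a $\Z/2$-valued homomorphism on $\pi_1(M^o)$. Put $(N,N')=(6,3)$ for triangulations and hexangulations and $(N,N')=(4,2)$ for quadrangulations, so that $C_{N'}<C_N$ with index two. By Lemma~\ref{lem:holonomy} the holonomy already lands in $C_N$, so reduction modulo $C_{N'}$ defines a homomorphism $\psi\colon\pi_1(M^o)\to C_N/C_{N'}\cong\Z/2$, and tautologically $H<C_{N'}$ if and only if $\psi\ident 0$. Everything therefore comes down to recognizing the vanishing of $\psi$ as the existence of the prescribed $2$-coloring.

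To make this identification I would introduce a geometric \emph{type}: a tangent-direction datum read modulo $C_{N'}$. The flat metric on $M^o$ turns the bundle of tangent directions taken \emph{modulo $C_{N'}$} into a flat circle bundle $\R/\tfrac{2\pi}{N'}\Z$ over $M^o$ whose parallel transport rotates a direction by the holonomy; hence its monodromy around a loop $\gamma$ is exactly $h(\gamma)\bmod C_{N'}$, which by Lemma~\ref{lem:holonomy} equals $\psi(\gamma)$. In each setting there is a canonical such datum attached to the relevant cells that takes only two values locally. (i)~For a triangulation, the orientation $r(f)\in SO_2/C_3=\R/\tfrac{2\pi}{3}\Z$ of each equilateral face: two faces sharing an edge are swapped by the $\pi$-rotation about the edge's midpoint, and $\pi\equiv\pi/3\pmod{2\pi/3}$, so $r$ jumps by the nontrivial element $\pi/3$ across every edge and takes the two values $\{0,\pi/3\}$; a globally consistent choice is an up/down $2$-coloring of the faces, i.e.\ the dual graph is bipartite. (ii)~For a hexangulation, the class $s(v)\in SO_2/C_3$ determined by the fact that every edge at $v$ has developed direction $\equiv s(v)\pmod{2\pi/3}$ (well defined for any degree, since the edges sit at $2\pi/3$ spacings in the cone metric); traversing an edge from $u$ to $v$ reverses its direction, adding $\pi\equiv\pi/3$, so $s$ flips across every edge of the $1$-skeleton, and a global choice is a $2$-coloring of the vertices. (iii)~For a quadrangulation, the direction of an edge as an unoriented line, an element of $SO_2/C_2=\R/\pi\Z$; edges consecutive around a face (or a vertex) differ by the square's interior angle $\pi/2$, the nontrivial element of $C_4/C_2$, so this direction takes two values differing by $\pi/2$ and alternates, and a global choice is an alternating $2$-coloring of the edges.

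In every case the two permitted values form a coset of $C_N/C_{N'}\cong\Z/2$ inside $\R/\tfrac{2\pi}{N'}\Z$, the datum flips by the nontrivial element across the relevant adjacency, and a globally consistent choice is precisely the stated $2$-coloring. Because this datum is a flat $\Z/2$-torsor with monodromy $\psi$, a global choice exists if and only if $\psi\ident0$, i.e.\ if and only if $H<C_{N'}$. This yields all three equivalences at once.

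The step I expect to require the most care is verifying, case by case, that the local type really takes only the two asserted values and flips across exactly the right cells — that is, computing the elementary transitions (the $\pi$-rotation relating edge-adjacent triangles, the edge-reversal relating $s(u)$ and $s(v)$, and the $\pi/2$ between consecutive edges of a square) together with the claim that they indeed add up to $\psi$. A clarifying consistency test, which also pins down orientation conventions, is the holonomy $e^{i\kappa}$ of a small loop about a single cone point of degree $k$: it equals $e^{-ik\pi/3}$, $e^{-2\pi i k/3}$ and $e^{-ik\pi/2}$ in the three cases, and lies in $C_{N'}$ exactly when the local coloring constraint at that vertex can be met (even degree for the face- and edge-colorings; no local constraint for the vertex-coloring, matching the fact that $e^{-2\pi ik/3}\in C_3$ for every $k$). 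Finally, since $M\setm G$ consists of disks, $\pi_1(M^o)$ is generated by loops in the $1$-skeleton together with these vertex loops, so checking $\psi$ on them suffices.
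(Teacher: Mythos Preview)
Your argument is correct and, at its core, coincides with the paper's: both identify the $2$-coloring with a $\Z/2$-valued datum (your ``type'') whose monodromy under parallel transport is exactly the holonomy reduced modulo~$C_{N'}$. The paper carries this out by hand in each direction---using the coloring to orient the edges and then tracking the angle between a parallel-transported vector and those oriented edges to get $H<C_{N'}$, and for the converse simply propagating a coloring along paths and invoking the holonomy bound to rule out contradictions---whereas you package the same geometry once and for all as a flat $\Z/2$-torsor with monodromy~$\psi$, so that trivial monodromy and the existence of a global section are tautologically equivalent. Your framing buys a uniform treatment of all three cases and makes the biconditional automatic; the paper's buys complete elementarity, with no bundle or torsor language. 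One small point: in your final paragraph, loops ``in the $1$-skeleton'' may pass through cone points and hence leave~$M^o$, so that sentence would need a word about perturbing off the exceptional vertices---but this is only your consistency check, not the main argument, which stands on its own.
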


\begin{proof}
In a hexangulation with $2$-colored vertices, orient
each edge from black to white; in a triangulation with $2$-colored
faces, orient each edge with black to its left.
Now repeat the parallel translation argument of Lemma~\ref{lem:holonomy}
but keeping track of the angle between $v$ and the \emph{oriented}
edges.  Since this changes only by multiples of $2\pi/3$, we get $H<C_3$.

For a quadrangulation with $2$-colored edges,
we claim that the angles from $v$ to edges of the same color are
congruent modulo $\pi$, while the angles from $v$ to edges of different
colors differ by odd multiples of $\pi/2$. This is easily proved
by looking at the moments when $\gamma$ enters and leaves a face.
Since $\gamma$ starts and ends on the same edge, $v$ comes back either
unchanged or rotated by $\pi$.

In all three cases, the converse follows by propagating a $2$-coloring along
arbitrary paths; the holonomy condition shows we will never encounter
a contradiction.
\qed\end{proof}

Our proofs Lemmas~\ref{lem:holonomy} and~\ref{lem:small-hol}
could alternatively be phrased in terms of the developing map
from the universal cover of~$M^o$ to the euclidean plane, whose
image is the corresponding regular tiling.
Note that the regular tilings can be colored in the ways described
and the rotational symmetries of the colored tilings are $C_3$ or $C_2$.

Recall that the Holonomy Theorem, our main tool,
restricts the holonomy group of a torus with just two cone points.
Before proving it in Sections~\ref{sec:metric_proof}
and~\ref{sec:conformal_proof} below,
we apply it to prove the results we listed as
Theorems \ref{thm:57}--\ref{thm:26quad}.

\begin{proof}[of Theorems \ref{thm:57}--\ref{thm:26quad}]
In the situations of Theorems \ref{thm:57}--\ref{thm:26quad},
the holonomy group is a subgroup of $C_n$ with $n := 6,4,3,3,2$, respectively,
according to Lemmas~\ref{lem:holonomy} and~\ref{lem:small-hol}.
On the other hand, the assumptions on the exceptional vertices imply
that the equilateral metric is a euclidean cone metric with two cone points
of curvature~$\pm 2\pi/n$. Then, by the Holonomy Theorem,
$C_n$ is a proper subgroup of the holonomy group.
This contradiction proves the theorems.
\qed\end{proof}

Because there are no intermediate subgroups between~$C_3$ or~$C_2$
and~$C_6$---or between~$C_2$ and~$C_4$---Lemma~\ref{lem:holonomy}
and the Holonomy Theorem immediately give the following:

\begin{cor}\label{cor:48holo}
The equilateral metric on any $4,\!8$- or $3,\!9$-triangulation
of the torus has holonomy exactly $H=C_6$.
Similarly, any $2,\!6$-quadrangulation has $H=C_4$ and any
$2,\!4$-hexangulation has $H=C_6$.
\end{cor}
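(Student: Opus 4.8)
The plan is to pinch the holonomy group $H$ between an upper and a lower bound that together leave only one possibility in each case. The upper bound comes from Lemma~\ref{lem:holonomy}, and the lower bound from the Holonomy Theorem; the two bounds then coincide because the relevant cyclic groups admit no intermediate subgroup.

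First I would determine, in each of the four cases, the integer $n$ appearing in the Holonomy Theorem, by computing the curvatures of the two exceptional vertices via the formula $2\pi(1-k/\bar n)$ recorded in Section~\ref{sec:cone_metrics}. For a $4,\!8$-triangulation ($\bar n=6$) these are $2\pi(1-4/6)=+2\pi/3$ and $2\pi(1-8/6)=-2\pi/3$, so $n=3$; for a $3,\!9$-triangulation they are $\pm 2\pi/2$, so $n=2$; for a $2,\!6$-quadrangulation ($\bar n=4$) they are $\pm 2\pi/2$, so $n=2$; and for a $2,\!4$-hexangulation ($\bar n=3$) they are $\pm 2\pi/3$, so $n=3$. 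In each case the two cone points have curvature $\pm 2\pi/n$ with $n\ge 2$, which is exactly the hypothesis of the Holonomy Theorem.

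Next I would apply both results. The Holonomy Theorem yields $C_n\lneqq H$, while Lemma~\ref{lem:holonomy} yields $H<C_6$ for the two triangulations and the hexangulation and $H<C_4$ for the quadrangulation. This produces the chains $C_3\lneqq H<C_6$ for the $4,\!8$-triangulation and the $2,\!4$-hexangulation, $C_2\lneqq H<C_6$ for the $3,\!9$-triangulation, and $C_2\lneqq H<C_4$ for the $2,\!6$-quadrangulation.

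Finally I would close each chain using the correspondence between subgroups of a cyclic group $C_m$ and divisors of $m$. Since $3\mid d\mid 6$ forces $d\in\{3,6\}$, the only subgroup of $C_6$ properly containing $C_3$ is $C_6$ itself; likewise $C_6$ is the only subgroup properly containing $C_2$, and in $C_4$ the only subgroup properly containing $C_2$ is $C_4$. Hence $H=C_6$ in the triangulation and hexangulation cases and $H=C_4$ for the quadrangulation. There is no genuine obstacle here, since all the substance lies in the two cited results; the only step needing a little care is the curvature bookkeeping that fixes $n$, together with the elementary observation that no intermediate subgroup exists.
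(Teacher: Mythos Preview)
Your proof is correct and follows exactly the paper's approach: sandwich $H$ between the lower bound $C_n$ from the Holonomy Theorem (properly) and the upper bound $C_6$ or $C_4$ from Lemma~\ref{lem:holonomy}, then observe that no intermediate subgroup exists. The paper states this in one sentence just before the corollary, while you have helpfully made the curvature bookkeeping and the subgroup lattice check explicit.
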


\section{Proof of the Holonomy Theorem 
using metric geometry}
\label{sec:metric_proof}

Our first proof of the Holonomy Theorem starts with a
torus with two cone points of curvature $\pm2\pi/n$.
By examining the structure of closed geodesics on this
surface, we explicitly find a loop whose holonomy
is a rotation by a smaller angle.

\begin{proof}[of the Holonomy Theorem]
We are given a torus with exactly two cone points $p_\pm$,
of curvature $\pm2\pi/n$.
Since the holonomy around 
$p_{\pm}$ is $e^{\pm 2\pi i/n}$, we know $C_n$ is a subgroup of $H$
and we must prove they are not equal.

Let $\gamma$ be any shortest non-contractible loop on~$M$.
Away from the cone points, it must be a geodesic in the usual
sense.  If it passes through a cone point~$p$
with angle~$\omega$, the angles to the left and right
of~$\gamma$ at~$p$ sum to~$\omega$.  But each of these angles
must be at least~$\pi$, for otherwise we could shorten~$\gamma$
by moving it off~$p$ to that side.  Thus we see that~$\gamma$
cannot pass through the cone point of positive curvature.

On the other hand, we are free to assume that $\gamma$ does
pass through~$p_-$.  If not, then near any point it looks
like a straight segment, so $\gamma$ has a neighborhood isometric
to a euclidean cylinder.  Thus $\gamma$ can be translated sideways,
while remaining a shortest geodesic.
This translation can be continued until we hit a singularity,
which must be~$p_-$.

So let $\pi+\alpha$ and $\pi+\beta$ be the angles formed by~$\gamma$ at~$p_-$,
with $\alpha\ge\beta\ge0$ and $\alpha+\beta=2\pi/n$.
If we consider curves parallel to~$\gamma$
just to either side, they have holonomy~$\alpha$ and~$\beta$.
Thus unless $\beta=0$, we have $H\ne C_n$, as desired.

If $\beta=0$ then $\gamma$ has to one side a euclidean cylinder neighborhood,
and as above, it can be translated sideways through this neighborhood.
Again, it will never hit a cone point of positive curvature.  This time
we stop when the translated~$\gamma'$ first touches~$\gamma$.  This
first contact must happen (only) at~$p_-$, for if it happened at
a regular point, $\gamma$ and~$\gamma'$ would coincide, and we would
have traced out the whole torus without seeing the positive cone point.

\begin{figure}
  \centering%
  \includegraphics[width=0.7\textwidth]{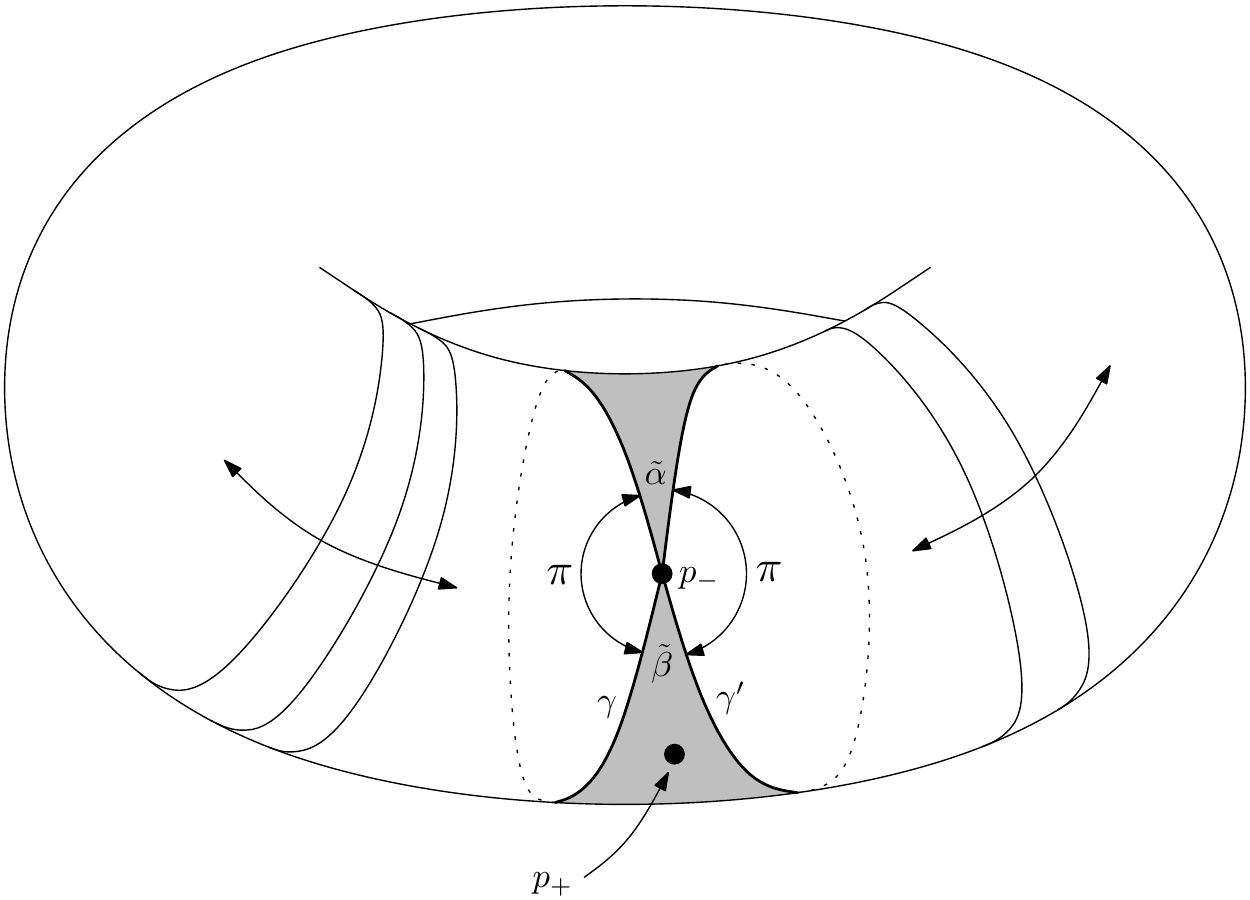}
  \caption{A schematic drawing of the parallel translates of the geodesic
     $\gamma$ on the torus.  The point $p_{+}$ lies in the shaded digon
     with angles 
     $\tilde\alpha$ and $\tilde\beta$;
     the unshaded region is a
     euclidean cylinder.} \label{fig:torus}
\end{figure}

It follows that $\gamma$ and~$\gamma'$ bound a digon (both of whose
vertices are at~$p_-$) containing the point $p_{+}$,
as shown in Figure~\ref{fig:torus}.
Let 
$\tilde\alpha\ge\tilde\beta>0$ 
be the angles of the digon, with 
$\tilde\alpha+\tilde\beta=2\pi/n$.
The complement of the digon is a euclidean cylinder, also meeting itself
at~$p_-$, foliated by translates of~$\gamma$.
Consider a closed path based at~$p_-$ running once along the length
of this cylinder.  If it is perturbed off~$p_-$ into the 
angle~$\tilde\alpha$
then its holonomy is 
$\tilde\alpha\in(0,2\pi/n)$.
This proves, as desired, that~$H$ is bigger than~$C_n$.
\qed\end{proof}

\begin{rmk}
There are many different equivalent ways to phrase this argument.
For instance, given a $5,\!7$-triangulation, one could cut open
the torus along geodesic arcs connecting the cone points to
give a planar polygon.  Its vertices would lie in the triangular
lattice in the plane.  Our statements about curvature and holonomy
would become conditions on the angles of such a polygon.
\end{rmk}

Figure~\ref{fig:two_sings} shows a simple example of a euclidean cone metric on the torus with two cone points of arbitrary curvatures $\pm\kappa$.
\begin{figure}
  \centering
  \begin{overpic}[scale=.6]{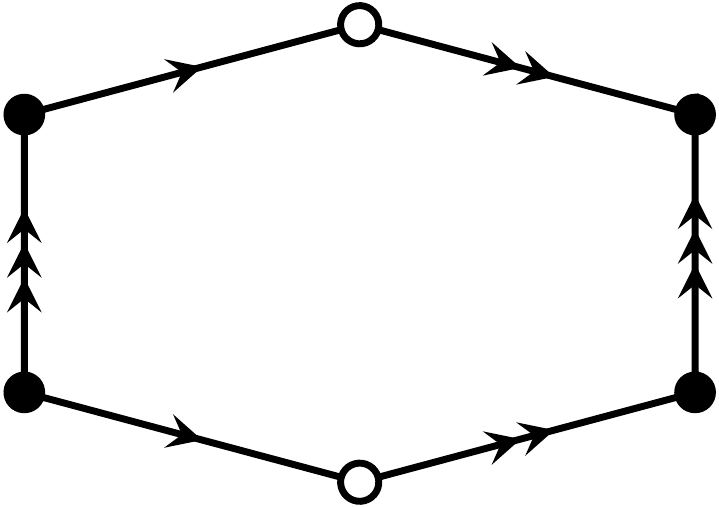}
    \put(7,21){$\frac{\pi}{2}+\frac{\kappa}{4}$}
    \put(40,53){$\pi-\frac{\kappa}{2}$}
  \end{overpic}
  \caption{Example of a torus equipped with a euclidean cone metric
    with two cone singularities of curvature $\pm\kappa$.
    The holonomy group is generated by a rotation through angle $\kappa/2$.}
  \label{fig:two_sings}
\end{figure}

\section{Proof of the Holonomy Theorem 
using conformal geometry}
\label{sec:conformal_proof}

Our second proof of the Holonomy Theorem starts instead with a torus~$M$
with a euclidean cone metric whose holonomy group is known to be
$C_{n}=\langle e^{2\pi i/n}\rangle$.
Each cone point $p_j$ has curvature $\kappa_j =: 2\pi k_j/n$,
an integral multiple of $2\pi/n$.
The formal linear combination $\sum k_j p_j$ is then a divisor on~$M$.
The lemma below shows that it is a \emph{principal divisor} for the conformal
structure induced by the cone metric on~$M$. That is, there
exists an \emph{elliptic function} (a meromorphic function on the torus,
or equivalently a doubly periodic function on its universal cover~$\C$)
with a zero of order $k_j$ at each cone point $p_{j}$ of positive curvature,
a pole of order $-k_j$ at each cone point $p_{j}$ of negative curvature,
and no other zeros or poles.

\begin{proof}[of the Holonomy Theorem]
Cauchy's residue theorem, applied to a fundamental domain, shows
that the residues of any elliptic function at its poles sum to zero.
In particular, there is no elliptic function
with a single simple pole~\cite[Theorem~4, p.~271]{Ahl},
meaning that no divisor of the form $p_+-p_-$ is principal.
But given a torus with holonomy $H=C_n$ and cone points $p_j$ of
curvature $2\pi k_j/n$, Lemma~\ref{lem:principal_divisor}
says $\sum k_j p_j$ is a principal divisor.  Thus if there are just
two cone points, their curvatures cannot be $\pm 2\pi/n$ but instead
must be some larger multiple of this.
\qed\end{proof}

\begin{lem}
  \label{lem:principal_divisor}
  Suppose the torus $M$ has a euclidean cone metric with holonomy group
  $H=C_{n}$ and cone points $p_{j}$ with curvature $2\pi k_{j} /n$.
  With respect to the conformal structure on~$M$ induced by the cone metric,
  the divisor $\sum k_{j}p_{j}$ is principal.
\end{lem}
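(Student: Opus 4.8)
The plan is to build the desired elliptic function directly from the cone metric by integrating a natural holomorphic $1$-form and exponentiating. The key point is that the conformal structure induced by the metric lets us write the metric locally as $e^{2u}\,|dz|^2$ for a flat coordinate~$z$, and that the assumption $H = C_n$ will make a certain globally defined object single-valued on the punctured torus~$M^o$.

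First I would describe the conformal structure explicitly. Away from the cone points, the euclidean metric gives each point a flat coordinate chart, well-defined up to a euclidean isometry; since $M$ is oriented, the orientation-preserving transition maps are of the form $z \mapsto az + b$ with $|a| = 1$, which are holomorphic. Hence these charts define a conformal (equivalently, complex) structure on $M^o$, and because the cone angles are finite this structure extends across the cone points, making $M$ a Riemann surface — a complex torus $\C/\Lambda$. In a flat chart the $1$-form $dz$ is defined up to multiplication by the unit-modulus constant~$a$; the transition constants~$a$ are precisely the holonomies. The crucial observation is that raising to a suitable power kills this ambiguity: since every holonomy lies in $C_n = \langle e^{2\pi i/n}\rangle$, the expression $(dz)^n$ is multiplied across charts only by $a^n = 1$, so $\omega := (dz)^n$ is a well-defined (single-valued) meromorphic $n$-differential on all of~$M$.

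Next I would read off the divisor of this $n$-differential. Near a cone point $p_j$ of curvature $\kappa_j = 2\pi k_j/n$, the metric is that of a cone of angle $2\pi - \kappa_j = 2\pi(1 - k_j/n)$; introducing a holomorphic coordinate $w$ centered at $p_j$ in which the conformal structure is standard, the flat coordinate satisfies $z \sim w^{1 - k_j/n}$ up to constants, so $dz \sim w^{-k_j/n}\,dw$ and therefore $\omega = (dz)^n \sim w^{-k_j}\,(dw)^n$. Thus $\omega$, viewed as a section of the $n$th power of the canonical bundle, has order $-k_j$ at each $p_j$ and is holomorphic and nonzero elsewhere. Since the canonical bundle of the torus is trivial (there is a nowhere-vanishing holomorphic $1$-form $dz_0$), I would fix such a global $dz_0$ and form the ratio
\begin{equation}
  f := \frac{\omega}{(dz_0)^n},
\end{equation}
which is a genuine meromorphic \emph{function} on~$M$. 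By the local computation its divisor is $\operatorname{div}(f) = -\sum_j k_j\, p_j$, since $(dz_0)^n$ is holomorphic and nonvanishing everywhere. Replacing $f$ by $1/f$ (or equivalently using $(dz_0)^n/\omega$) then gives an elliptic function with divisor exactly $\sum_j k_j\, p_j$, proving that this divisor is principal.

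The step I expect to be the main obstacle is justifying rigorously that the conformal structure extends holomorphically across each cone point and pinning down the precise local exponent $1 - k_j/n$ relating the flat coordinate to the holomorphic coordinate~$w$ — that is, verifying that a euclidean cone of angle $2\pi(1-k_j/n)$ is conformally a smooth punctured disk with the stated behavior of $dz$. This is the content that makes the order-$(-k_j)$ claim at each puncture correct, and it is where the integrality of $k_j$ (equivalently $H = C_n$) is genuinely used, so I would take care to treat it carefully rather than wave it through.
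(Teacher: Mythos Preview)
Your argument is correct and is essentially the same as the paper's: both construct the $n$-differential $(dz)^n$ from the flat developing coordinate, use $H=C_n$ to see it is single-valued on~$M$, compute its order at each cone point via the local model $z\sim w^{1-k_j/n}$, and then divide by $(dz_0)^n$ for a global holomorphic $1$-form~$dz_0$ on the torus to obtain the desired elliptic function. (Minor remark: your opening sentence about ``integrating a natural holomorphic $1$-form and exponentiating'' does not match what you actually do, which is simply to take a ratio of $n$-differentials; and your $z$/$w$ notation is swapped relative to the paper's, but the substance is identical.)
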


\begin{proof}
The universal cover of $M^{o}=M\setm\{p_{1},\ldots,p_{m}\}$ extends, via
metric completion, to a branched cover $\hat M\rightarrow M$,
ramified over the singularities~$p_{j}$. Let $w:\hat M\rightarrow \C$
be the developing map (where $\C$ is equipped with the standard
euclidean metric). We will view the function~$w$ also as a branched
multivalued function on $M$, and our proof will proceed by analyzing
$(dw)^n$ as a meromorphic differential of degree~$n$ on~$M$.

Recall that the conformal structure of $M$ is defined by the following
atlas. In a sufficiently small neighborhood of a nonsingular point,
any branch of $w$ may serve as a coordinate. In a sufficiently small
neighborhood of a singular point $p_{j}$, choose any connected set of
branches of $w$ and let $w_{j}\in\C$ be their common value at
$p_{j}$. Since the cone angle at $p_{j}$ is $2\pi(n-k_{j})/n$, a
coordinate function around $p_{j}$ can be defined by
\begin{equation}
\label{eq:u_coord}
u_{j}=(w-w_{j})^{n/(n-k_{j})}.
\end{equation}
(The expression on the right hand side is multivalued on $M$, but
it is unramified at $p_{j}$; any choice of branch will do.)

Note that for any deck transformation
$\phi:\hat M\rightarrow\hat M$, there exist $a\in H$ and $b\in\C$ such that
\begin{equation*}
  w\circ\phi = aw+b. 
\end{equation*}
This implies $\phi^{*}\,dw=a\,dw$. By assumption, $H$ is
generated by $e^{2\pi i/n}$, so $a^{n}=1$ and $(dw)^{n}$ is a
well-defined meromorphic differential of degree~$n$ on~$M$,
that is, a meromorphic section of $K^{n}$, where $K$ is the canonical bundle.

Near a cone point $p_{j}$, equation~\eqref{eq:u_coord} implies
\begin{equation*}
  dw = \tfrac{n-k_j}{n}\,u_{j}^{-k_{j}/n}\,du_{j},
\end{equation*}
giving
\begin{equation*}
  (dw)^{n}=\big(\tfrac{n-k_j}{n}\big)^{n}\,u_{j}^{-k_{j}}\,(du_{j})^{n}.
\end{equation*}
Thus we see that $(dw)^n$ has poles of order~$k_{j}$ at the
cone points~$p_{j}$ of positive curvature, and zeros of
order~$-k_{j}$ at the cone points~$p_{j}$ of negative curvature.
That is, its divisor is $\sum -k_j p_j$.

On the torus $M=\C/\Lambda$ (or more properly on its universal cover)
there is a global uniformizing coordinate~$z$.  Then $dz$ is a nonvanishing
holomorphic differential on~$M$, a section of~$K$.  It follows that
$(dz)^{n}/(dw)^{n}$ is a meromorphic function with divisor $\sum k_{j}p_{j}$,
so this divisor is principal.
\qed\end{proof}

\section{Burgers vectors and a theorem of Dress}
\label{sec:Burgers}

Since we have shown there is no there is no $5,\!7$-triangulation
of the torus, it may be surprising that
it is possible to find a $5,\!7$-triangulation of the infinite plane.
Figure~\ref{fig:KS57ex} shows an example nicely laid out in the
euclidean plane by Ken Stephenson with his \emph{CirclePack} program.
\begin{figure}
  \centering%
  \includegraphics[scale=.7, trim=30pt 50pt 0 30pt, clip]{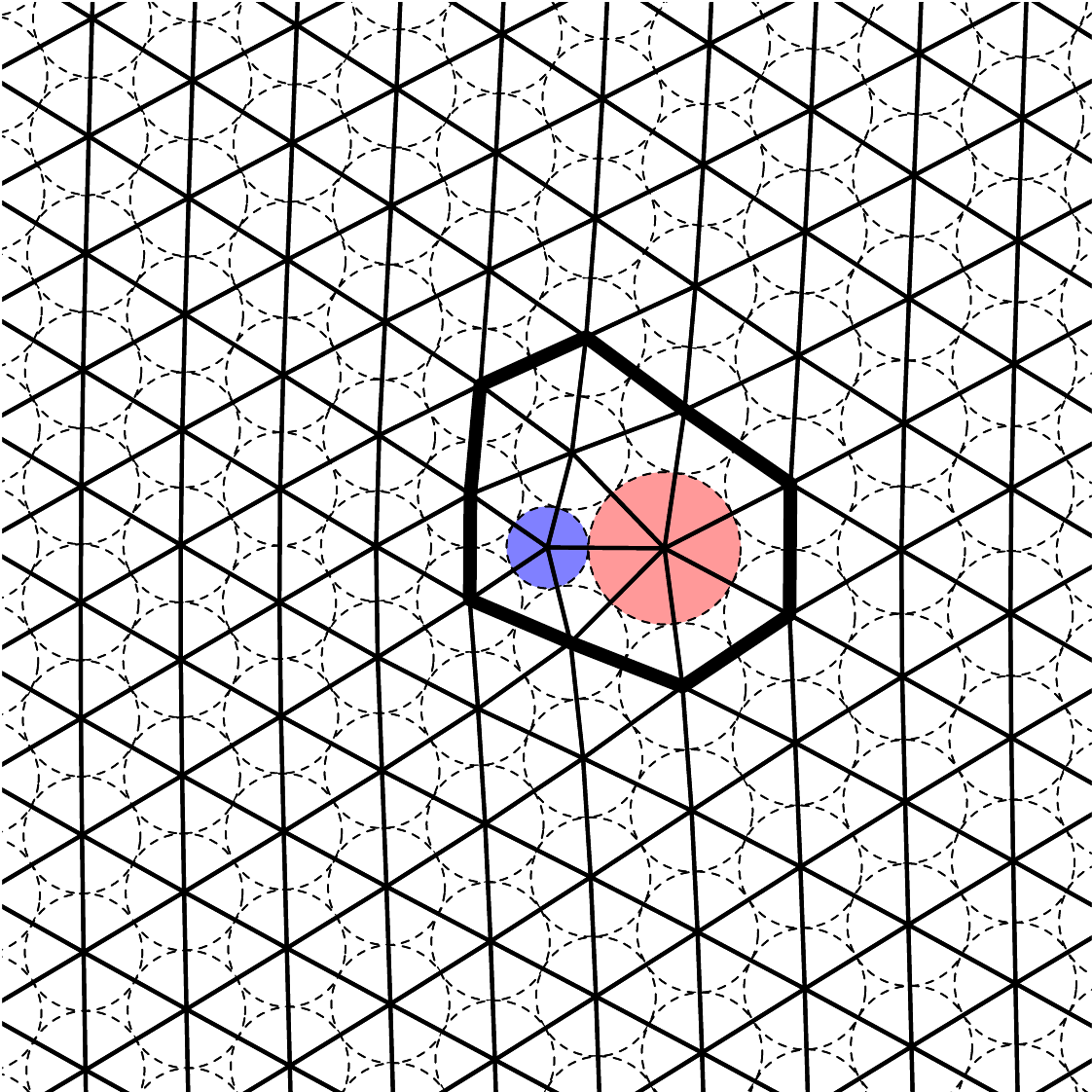}%
  \hspace{1.5cm}%
  \raisebox{30pt}{\includegraphics[scale=.3]{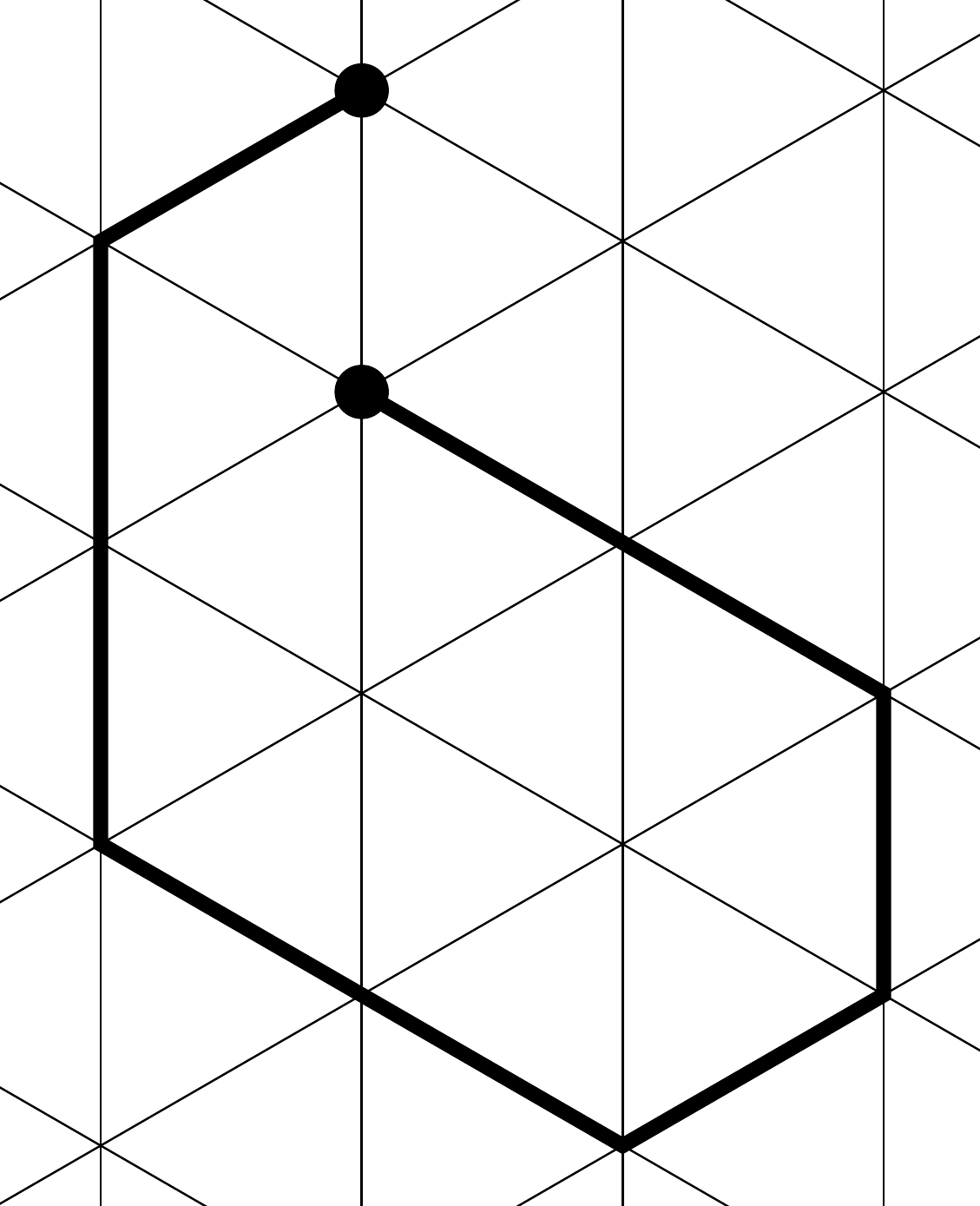}}%
\caption[A 57--triangulation of the plane]
{This figure (left), computed by Ken Stephenson with \emph{CirclePack},
shows a $5,\!7$-triangulation of the plane, in which the two
exceptional vertices are adjacent, forming a dislocation in the
hexagonal lattice.  The heavy lines show a loop around the dislocation,
from which the Burgers vector can be calculated: A path in the regular
hexagonal lattice (right) with the same $9$~steps and $6$~left turns would fail
to close; the Burgers vector is the resulting gap, here one vertical step.
The nonzero Burgers vector shows this triangulation is not isomorphic
to the regular one near infinity---no matter how far out we go, we
still measure the same Burgers vector for this dislocation.} \label{fig:KS57ex}
\end{figure}

We note, however, that this triangulation is not isomorphic
near infinity to the regular one.  Physicists and crystallographers
measure the difference---a dislocation in the lattice---by the
so-called Burgers vector.  As shown
in Figure~\ref{fig:KS57ex}, if a closed path enclosing both exceptional
vertices is transferred onto the regular triangular lattice,
it will fail to close.  The gap is the Burgers vector,
and is independent of the path chosen; the dislocation can be
measured near its source or arbitrarily far away.

Andreas Dress used similar ideas in~\cite{Dre} to study triangulations of
the plane that are isomorphic to the regular one outside some bounded region.
Such a triangulation has a finite number of exceptional vertices.
Dress sketched a proof that the number of exceptional vertices cannot
be one or two.  The first step is to consider a large rectangle
enclosing the exceptional vertices, whose boundary is within the regular
part of the triangulation.  Its opposite sides can thus be glued to
form a torus.
The Euler characteristic then shows immediately that there
cannot be a single exceptional vertex.  The importance
of Dress's theorem is that it also rules out the case of two cone points.
Thus, for instance, a $5,\!7$-triangulation of the plane cannot
be isomorphic to the regular triangulation near infinity.
Of course, applying edge flips to the regular triangulation
produces examples which are still regular near infinity,
including a $5^27^2$-triangulation.

Before sketching Dress's argument further, we note that the Holonomy Theorem
gives an alternative proof of the three most important cases.
(These are the only cases arising in simplicial triangulations---and
indeed the only cases considered by Dress~\cite{Dre}, although
we will see below that his original argument applies equally well
to the cases of $2,\!10$- and $1,\!11$-triangulations.)
\begin{cor}
A $5,\!7$-, $4,\!8$- or $3,\!9$-triangulation of the plane
cannot be isomorphic to the regular triangulation near infinity.
\end{cor}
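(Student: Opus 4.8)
The plan is to reduce the planar statement to the torus by the cut-and-glue construction that begins Dress's argument, and then to apply the Holonomy Theorem. Suppose for contradiction that one of these triangulations of the plane agrees, outside some bounded region~$B$, with the regular triangular lattice. I would choose two independent lattice translations $\tau_1,\tau_2$ of the regular part, long enough that the parallelogram~$D$ they span can be positioned with both exceptional vertices in its interior and its whole boundary in the regular region. Gluing opposite sides of~$D$ by $\tau_1$ and~$\tau_2$ produces a torus~$M$ carrying a triangulation whose only exceptional vertices are the original two. Since $\bar n=6$ for triangulations, a vertex of degree~$d$ has curvature $2\pi(1-d/6)$, so the pairs $5,7$, $4,8$ and $3,9$ give cone points of curvature exactly $\pm2\pi/n$ with $n=6,3,2$ respectively. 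Thus the equilateral metric makes~$M$ a euclidean torus with exactly two cone points meeting the hypotheses of the Holonomy Theorem.

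The crucial step is to compute the holonomy group~$H$ of this metric and to show that it equals $C_n$ \emph{exactly}. Recall that $H$ is generated by the holonomies of loops around the two cone points together with the holonomies of two generators of $\pi_1(M)$. The loops around $p_\pm$ contribute $e^{\pm 2\pi i/n}$, which already generate $C_n$. For the two remaining generators I would take the loops obtained from a pair of adjacent sides of~$D$; after the identification each such side closes up into a loop, and these two loops generate $\pi_1(M)$. They lie entirely in the regular region, where the metric is flat and where the gluings are by the \emph{translations} $\tau_1,\tau_2$. Since a translation has trivial rotational part, parallel transport along each of these loops returns every vector to itself, so both generators have trivial holonomy. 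Hence $H=\langle e^{2\pi i/n}\rangle=C_n$.

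This is precisely where the hypothesis ``isomorphic to the regular triangulation near infinity'' does its work: it forces the gluing maps to be pure lattice translations, which pins~$H$ down to $C_n$ rather than merely to a subgroup of~$C_6$ as in Lemma~\ref{lem:holonomy}. The contradiction is then immediate, since the Holonomy Theorem asserts $C_n\lneqq H$ whereas we have just shown $H=C_n$. This settles all three cases $n=6,3,2$ uniformly.

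The main obstacle I anticipate is the bookkeeping that makes the cut-and-glue legitimate: one must check that $D$ can be chosen with lattice-line boundary so that $\tau_1,\tau_2$ identify its sides both combinatorially (yielding a genuine triangulation of~$M$) and isometrically for the equilateral metric (so that the only cone points are the two exceptional vertices). One should also record why the present corollary covers exactly these three cases: the argument requires curvatures equal to $\pm2\pi/n$, whereas the degree pairs $2,10$ and $1,11$ give curvatures $\pm2\pi\cdot\tfrac{2}{3}$, a larger multiple of $2\pi/3$, so the Holonomy Theorem does not apply and those cases must instead be handled by Dress's original argument.
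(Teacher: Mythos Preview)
Your proof is correct and follows essentially the same route as the paper: glue a large lattice parallelogram to a torus, observe that the two side-identifications are pure translations so the corresponding generators of $\pi_1(M)$ have trivial rotational holonomy and hence $H=C_n$ exactly, and contradict the Holonomy Theorem. The paper differs only cosmetically, invoking Theorem~\ref{thm:57} as a shortcut for the $5,\!7$ case rather than running the holonomy argument uniformly; also, your closing aside about the $1,\!11$ pair is slightly off---the curvatures there are $\pm 2\pi\cdot\tfrac{5}{6}$, not $\pm 2\pi\cdot\tfrac{2}{3}$---though your conclusion that the Holonomy Theorem does not apply to $2,\!10$ and $1,\!11$ stands.
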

\begin{proof}
As above, given a triangulation that is regular near infinity,
we can glue opposite sides of a large rectangle to produce a triangulation
of the torus, with the same two exceptional vertices.
The $5,\!7$ case is then ruled out immediately by 
Theorem~\ref{thm:57}.

For a $4,\!8$- or $3,\!9$-triangulation,
consider the holonomy of the equilateral
metric on the triangulated torus.  The sides of the rectangle
generate the fundamental group of the (unpunctured) torus,
but since they lie in the regular background triangulation,
they have no rotational holonomy.
Thus the holonomy group is generated just by loops around
the cone points, contradicting the Holonomy Theorem.
\qed\end{proof}

The proof sketched by Dress can be understood as an argument that
a triangulation with two exceptional vertices has nonzero Burgers vector,
while a triangulation regular near infinity must have zero Burgers vector.
To make this precise, we now propose a mathematical interpretation
of the Burgers vector in terms of the developing map.

\begin{dfn}
Given any euclidean cone metric on an oriented surface~$M$,
a \emph{developing map} is an oriented local isometry mapping
the universal cover of~$M^o$ to the euclidean plane.
Pre-composing a developing map with a deck transformation of the covering
gives a new developing map.
But any two developing maps for the same cone metric differ
by (post-composition with) a euclidean motion in the plane.
Identifying the group of deck
transformations with the fundamental group $\pi_1(M^o,x)$,
where~$x$ is any basepoint, we thus get a homomorphism
$$\hat h: \pi_1(M^o,x)\to SE_2,$$
where $SE_2=\R^2\rtimes SO_2$ is the euclidean group.
We call~$\hat h$ the \emph{holonomy} of the developing map.
Its rotational part (that is, its composition with
the projection $SE_2\to SO_2$) is the metric
holonomy~$h$ discussed above.
\end{dfn}

For the equilateral metric arising from a triangulation
of~$M$, the developing map sends each triangle linearly to one
in the regular triangulation of the plane.  Thus its holonomy
lies in the symmetry group of the triangular lattice.

The image of a closed path~$\gamma$ in~$M^o$
is a locally isometric path in the plane, with the
same geodesic curvature.  In particular, if~$\gamma$
follows edges of the triangulation through regular vertices,
then its image follows edges of the triangular lattice
in the plane, making the same turn at each corresponding
vertex.  This mimics the usual crystallographic definition
of the Burgers vector.  In particular, suppose~$\gamma$ is a loop in~$M^o$
bounding a disk in $M$ in which the cone points have total curvature zero.
Then $h(\gamma)=0$, so $\hat h(\gamma)$ is a translation,
called the \emph{Burgers vector}.

We can now flesh out Dress's argument.
\begin{thm}[Dress~\cite{Dre}]
A triangulation of the plane with exactly two exceptional vertices
cannot be isomorphic to the regular triangulation near infinity.
\end{thm}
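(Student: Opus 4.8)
The plan is to run the two halves of the dichotomy indicated in the text: a triangulation that is regular near infinity must have a \emph{vanishing} Burgers vector, whereas any torus carrying exactly two cone points of opposite curvature has a \emph{nonvanishing} one. First I would suppose, for contradiction, that such a triangulation exists, with the two exceptional vertices $p_{\pm}$. Enclosing both in a large rectangle whose four sides run only through regular vertices, I glue opposite sides to obtain a torus $M$ with the equilateral cone metric, having $p_{\pm}$ as its only two cone points. Gauss--Bonnet on $M$ (Euler characteristic $0$) forces the two curvatures to be opposite, say $\pm\kappa$ with $\kappa\not\equiv 0\pmod{2\pi}$. This is the only point where the hypothesis ``exactly two'' enters, and it treats every admissible degree pair $d_++d_-=12$ at once; unlike the Holonomy Theorem, this argument needs no integrality of $6/\kappa$, so it will cover the $2,\!10$- and $1,\!11$-cases as well.

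For the ``regular near infinity'' half, let $a,b\in\pi_1(M)$ be the classes of the two pairs of glued sides. Each side lies in the regular background, so its development is a straight segment of the triangular lattice with no net turning; hence $\hat h(a)$ and $\hat h(b)$ are pure translations. In the twice-punctured torus $M^o$, the product $\alpha\beta$ of small loops around $p_+$ and $p_-$ is homotopic to a commutator of the handle generators $a$ and $b$. Since translations commute, $\hat h(\alpha\beta)$ is the identity, so the Burgers vector of $\alpha\beta$ is zero.

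For the ``two cone points'' half, I would compute $\hat h(\alpha\beta)$ intrinsically. Its rotational part is trivial because $\kappa+(-\kappa)\equiv0$, confirming that $\hat h(\alpha\beta)$ is a translation, i.e.\ the Burgers vector. To evaluate it, route the composite loop along a geodesic arc $\delta$ (say a shortest path) joining $p_-$ to $p_+$ inside the disk; its interior is regular, so it develops to a straight segment whose endpoints are the developed apices $c_-$ and $c_+$. Then $\hat h(\beta)$ is rotation by $-\kappa$ about $c_-$ and $\hat h(\alpha)$ is rotation by $+\kappa$ about $c_+$, and $c_+-c_-$ is exactly the developed image of $\delta$. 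A direct calculation shows that the composite of two opposite rotations about distinct centers is a translation, by $(I-R_{\kappa})(c_+-c_-)$ up to the ordering convention, where $R_{\kappa}\in SO_2$ is rotation by $\kappa$. Because $\kappa\not\equiv0$, the map $I-R_{\kappa}$ is invertible, so this translation is nonzero if and only if $c_+\neq c_-$; and since the developing map is a local isometry, $|c_+-c_-|=\mathrm{length}(\delta)>0$. Hence the Burgers vector is nonzero, contradicting the previous paragraph and proving the theorem.

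The main obstacle I anticipate is bookkeeping in this last step rather than any genuine difficulty. One must arrange the paths so that the centers $c_{\pm}$ occurring in the rotation-composition formula really are the two developed endpoints of the \emph{single} arc $\delta$, since the developing map is multivalued and each apex is only defined up to the ambient holonomy. Fixing a basepoint and one lift of $\delta$, and routing $\alpha\beta$ so that its two sub-loops are traversed immediately after travelling out along $\delta$, makes the identification of $c_+-c_-$ with the development of $\delta$ unambiguous and reduces the whole claim to the elementary planar fact about composing two opposite rotations about distinct centers.
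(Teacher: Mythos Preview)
Your proof is correct and follows essentially the same argument as the paper: show the Burgers vector of a loop enclosing both cone points vanishes (from regularity near infinity) yet equals the nonzero composition of two opposite rotations about distinct centers. The only cosmetic difference is that you first glue the rectangle into a torus and read ``Burgers vector $=0$'' off the commutator relation $\alpha\beta=[a,b]$ with $\hat h(a),\hat h(b)$ pure translations, whereas the paper stays in the plane and simply pushes the loop $\gamma$ out to infinity to see $\hat h(\gamma)=0$ directly; the torus detour is harmless but unnecessary here.
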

\begin{proof}
Given a triangulation of the plane isomorphic to the regular one
near infinity, let~$\gamma$ be a loop around all the exceptional vertices.
Taking a representative of the homotopy class near infinity, we see
$\hat h(\gamma)=0$.
If there are just two opposite cone points~$p_\pm$, then~$\gamma$
can also be written as a loop around~$p_+$ followed by one around $p_-$.
The holonomy of each individual loop is a rotation around the image of
the cone point under the developing map.  Of course each cone point
has many images, but the images of $p_\pm$ we care about are
are distinct points in the plane---indeed separated by the
same distance as the original points $p_\pm$.  Then we are done, since
the two opposite rotations around distinct centers compose to give
a nonzero translation $\hat h(\gamma)$, the Burgers vector.
\qed\end{proof}

We have seen above that the Holonomy Theorem gives an alternative proof
of most cases of Dress's theorem.  Physicists have often speculated
that, conversely, arguments like those of Dress could be used
to prove Theorem~\ref{thm:57}, that is, to rule out of $5,\!7$-triangulation
of a torus.  More precisely, the argument is supposed to be as follows:
Given a triangulation of the torus with two exceptional vertices,
choose standard generators~$\alpha$ and~$\beta$ for the fundamental
group of the torus.  The loop
$$\gamma := \alpha\cdot\beta\cdot\alpha^{-1}\cdot\beta^{-1}$$
is a loop around both cone points, so as in Dress's argument, its
holonomy must be a nonzero translation, the product of two opposite
rotations about different centers.  The contradiction is now supposed
to arise from the fact that $\gamma=[\alpha,\beta]$ is a commutator
and thus should give zero Burgers vector.  This argument would work if
the holonomy of~$\alpha$ and~$\beta$ were purely translational.
Instead, we see that this argument proves at least one of them has
nontrivial rotational holonomy so that the commutator in $SE_2$ is
nontrivial, as in Figure~\ref{fig:48}.  This is essentially a weaker
variant of the Holonomy Theorem, which also says something
about the amount of rotation; it does not seem that
Theorems~\ref{thm:57}--\ref{thm:26quad} would follow from this weaker
variant.

\begin{figure}
\centering
\includegraphics[height=5cm]{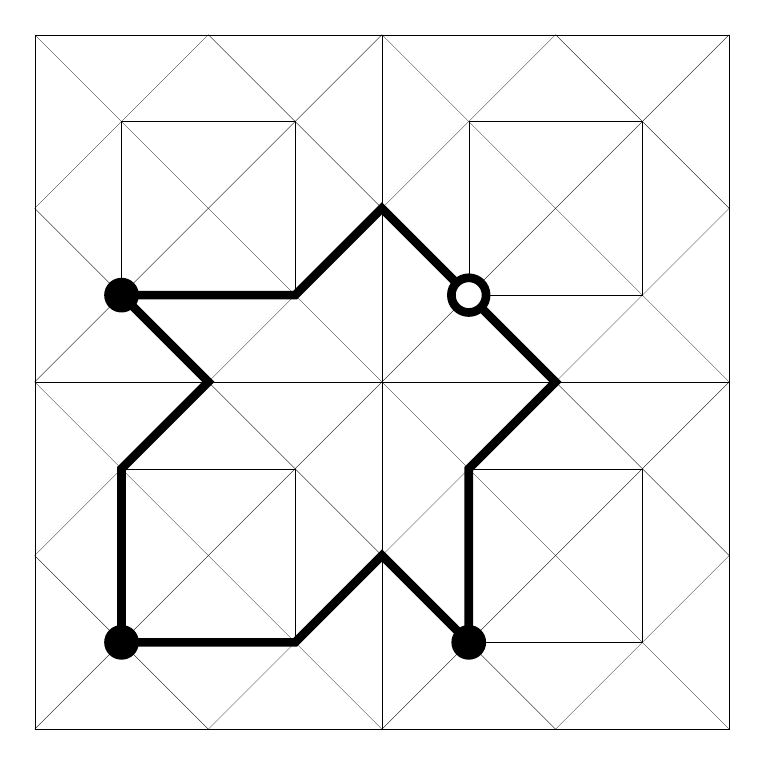}
\hspace{2cm}
\includegraphics[height=62mm]{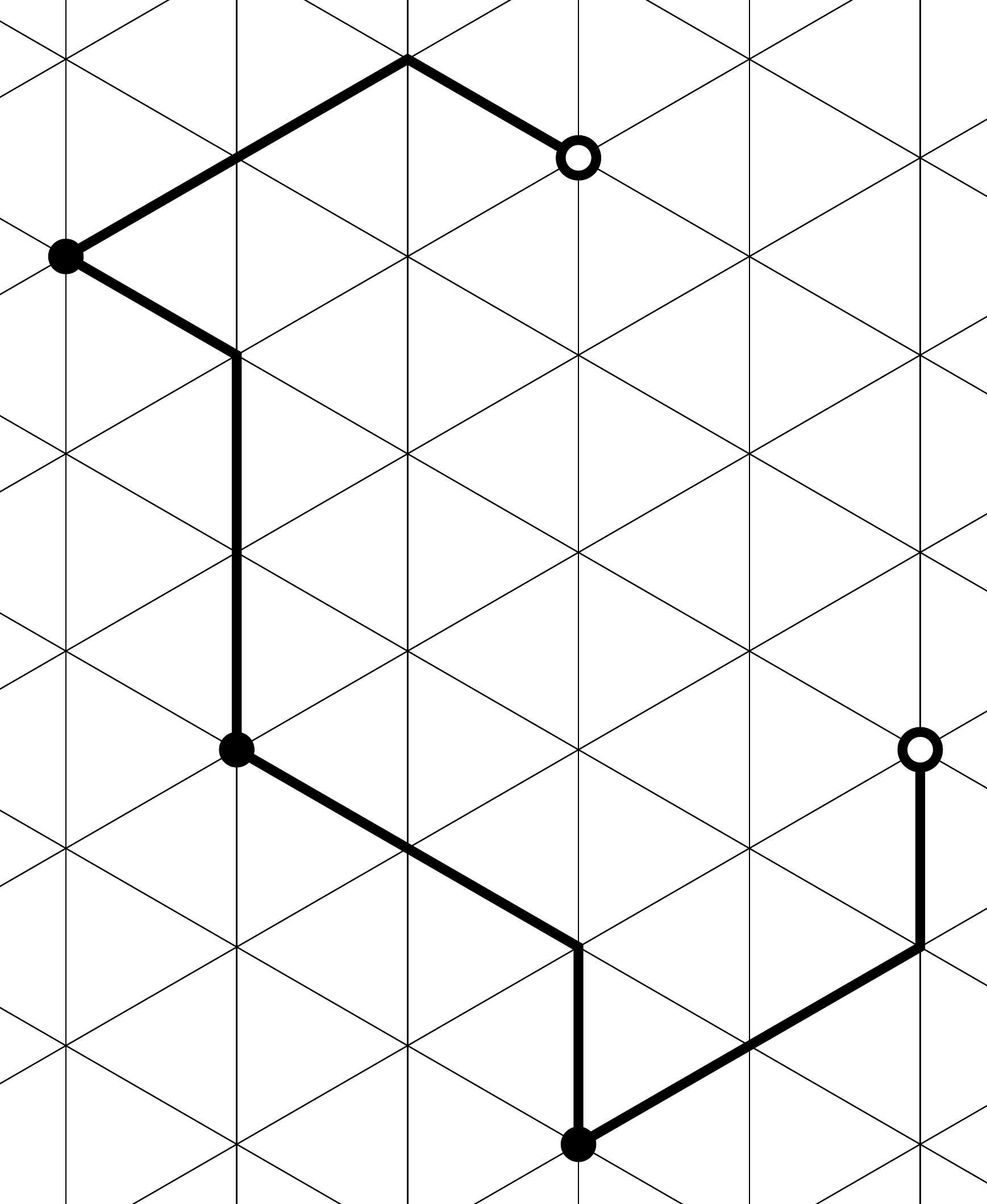}
\caption[Burgers vector for the 48-triangulation]
{The $2$-fold refinement (left) of the $4,\!8$-triangulation
of Figure~\ref{fig:examples}(a)
has a fundamental domain whose boundary curve $\gamma$
(shown as the heavier line)
only passes through regular vertices.  If we trace the same steps in
a regular hexagonal lattice, we get a nonclosed path (right);
the difference of its endpoints is the nonzero Burgers vector of~$\gamma$.
Note that opposite arcs in the boundary of the fundamental
domain (which get glued to each other to form the torus)
are no longer parallel to each other when developed into
the regular lattice; they are rotated by $\pi/3$, corresponding
to the holonomy $H=C_6$ guaranteed by Corollary~\ref{cor:48holo}.
The intuition that a periodic structure should have zero
Burgers vector thus fails.  Indeed the rotational holonomy
shows that $\hat h$ is not a translation for the nontrivial loops
$\alpha$ and $\beta$ on the torus, so their Burgers
vectors aren't even defined.}
\label{fig:48}
\end{figure}

\section{Examples of non-toroidal graphs}
\label{sec:graphs}
A graph is called \emph{toroidal} if it can be embedded in the
torus. As implied by the Robertson--Seymour theorem, toroidal graphs
are characterized by a finite set of forbidden minors.
(See~\cite{GMC09} for more background information.)

Theorems~\ref{thm:57}--\ref{thm:24} provide 
an infinite family of non-toroidal graphs. Recall that the {\it girth} of
a graph is the length of the shortest cycle in it.  
The girth is at least $3$ if and only if the graph has no
loops or multiple edges.

\begin{cor} \label{cor:57_Embed}
If a graph~$G$ satisfies any one of the following sets of assumptions,
then it cannot be embedded in the torus:

\begin{enumerate}[(a)]
\item All vertices of~$G$ have degree~$6$, except for one of degree~$5$
and one of degree~$7$, 
and $G$ has girth at least~$3$.

\item All vertices of~$G$ have degree~$4$, except for one of degree~$3$
and one of degree~$5$, and $G$ has girth at least~$4$.

\item All vertices of~$G$ have degree~$3$, except for one of degree~$2$
and one of degree~$4$, and $G$ is bipartite with girth at least~$6$.
\end{enumerate}
\end{cor}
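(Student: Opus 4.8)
The plan is to argue by contradiction: assuming $G$ embeds in the torus, I want to show that in cases (a), (b), (c) the embedding is forced to be a triangulation, a quadrangulation, or a hexangulation respectively, carrying exactly the two prescribed exceptional vertices. Then Theorem~\ref{thm:57} (case a), Theorem~\ref{thm:35} (case b), or Theorem~\ref{thm:24} (case c, where the hypothesis that $G$ is bipartite makes it a bicolored $2,\!4$-hexangulation) supplies the contradiction. So the entire task is to promote a bare embedding of $G$ into a genuine \emph{map} all of whose faces are $g_0$-gons, where $g_0=3,4,6$ is the girth threshold in the three cases.

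First I would record the edge count from the degree data. Feeding the prescribed degrees into the double-counting identity $2E=\sum_k k\,v_k$ gives $2E=6V,\,4V,\,3V$, that is $E=3V,\,2V,\,\tfrac32 V$; in every case $E=\tfrac{g_0}{g_0-2}\,V$. This is exactly the value at which the girth bound for the torus becomes an equality, and that tightness is the source of the rigidity. Next I would run the Euler/counting argument, being careful about possibly non-cellular embeddings. From $\chi(\text{torus})=\chi(G)+\sum_f\chi(f)$ and $\chi(f)\le 1$ (with equality iff $f$ is a disk) one gets $F\ge E-V$, with equality iff the embedding is cellular. Counting edge-sides, $2E=\sum_f \ell_f$ where $\ell_f$ is the boundary length of $f$; provided $\ell_f\ge g_0$ for every face, this yields $2E\ge g_0 F\ge g_0(E-V)$, hence $E\le\tfrac{g_0}{g_0-2}V$. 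Since $E$ meets this bound exactly, every inequality is an equality: the embedding is cellular and each face is a $g_0$-gon. Thus the map is a triangulation, quadrangulation, or hexangulation with precisely the two demanded exceptional vertices, and the respective theorem applies.

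The main obstacle is justifying $\ell_f\ge g_0$ for \emph{every} face. The girth hypothesis forbids loops and multiple edges, so no face boundary is a genuine cycle shorter than $g_0$; moreover a closed boundary walk with no repeated edge is a closed trail, hence contains a cycle and has length at least the girth. The one remaining danger is a boundary walk traversing a single edge twice, which occurs exactly at a \emph{bridge} (cut edge). I would exclude bridges as follows. A bridge $e$ in a cellular torus embedding is adjacent to a single disk face along both its sides, so the handle of the torus must lie on one side of $e$ and the other side $A$ is embedded in a disk, hence \emph{planar}. A parity argument (the degree sum on the $A$-side equals $2E_A$, which is even, yet differs from $\bar n\,|A|$ by the contributions of the vertices it contains) shows that $e$ separates the two exceptional vertices, so $A$ contains only one exceptional vertex together with the bridge endpoint whose $A$-degree drops by one. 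Consequently the total deficiency $\sum_{w\in A}(\bar n-\deg_A w)$ is at most a small constant (at most $2$), with $\bar n=6,4,3$ as in~\eqref{eq:degree_sequence_n-gons}. But any simple planar graph of girth $\ge g_0$ (or, in the degenerate tree case, any tree of minimum degree $\ge 2$) has deficiency at least $2\bar n\ge 6$, the sphere form of~\eqref{eq:degree_sequence_n-gons}. This contradiction removes every bridge.

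Carrying out this bridge exclusion cleanly is the delicate part; in particular, when $G$ has several bridges they all lie on the path between the two exceptional vertices in the block--cut tree, and I would choose the bridge whose far side avoids the handle so that the planar side $A$ indeed contains a single exceptional vertex. Everything else—the edge count, the Euler inequality, and the final appeal to Theorems~\ref{thm:57}, \ref{thm:35}, and~\ref{thm:24}—is routine bookkeeping, so I expect the entire difficulty of the proof to be concentrated in showing that the embedding is a bona fide map with no degenerate faces.
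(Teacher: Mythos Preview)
Your overall strategy coincides with the paper's: squeeze Euler's formula against the double count $\sum_f \ell_f = 2E$ to force every face to be a disk $g_0$-gon (with $g_0=3,4,6$), then invoke Theorems~\ref{thm:57}--\ref{thm:24}. You correctly flag the inequality $\ell_f \ge g_0$ as the step needing care, but your proposed justification has a gap. Your dichotomy---either the boundary walk repeats no edge, or the repeated edge is a bridge---is false: an edge appearing twice in a facial walk need not be a cut edge. For instance, every edge of a non-separating embedded cycle has the same (annular) complementary face on both sides, yet none of those edges is a bridge. So even after your deficiency argument eliminates bridges, you still have not established $\ell_f \ge g_0$. (Incidentally, the parity step you invoke to separate the two exceptional vertices fails in case~(c), where the regular degree~$3$ is odd while the exceptional degrees $2$ and $4$ are even; fortunately your deficiency bound does not actually need that separation.)

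The clean fix, which the paper uses without comment, is short: since every vertex has degree at least~$2$ in all three cases, no facial walk ever immediately backtracks along the edge it just used. A non-backtracking closed walk in a simple graph necessarily contains a cycle (take the first repeated vertex; the subwalk between its two visits has length at least~$3$ because loops, multi-edges, and backtracks are excluded, and is then a genuine cycle), so its length is at least the girth. Hence each boundary component of each face contributes at least $g_0$ to $\ell_f$, whether or not the face is a disk, and the rest of your argument goes through exactly as written. The entire bridge discussion becomes unnecessary.
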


\begin{proof}
  Our assumptions mean that the average vertex degree $k$ is~$6$, $4$ or~$3$,
  respectively, while the girth is at least $\bar k := 2k/(k-2)$.
  If $G$ is embedded in a torus, then each face
  has at least $\bar k$ sides. (Here, a face is a connected component of
  the complement of $G$. A priori, it need not be a
  topological disk, nor does its boundary have to be connected.) By
  double counting the edges in two ways as before, we obtain
  the inequality
  $$0=\chi=V-E+F\leq 2E\Bigl(\frac{1}{k}-\frac{1}{2}+\frac{1}{\bar k}\Bigr).$$
  On the other hand, we have $V-E+F\geq 0$, with equality if and only if all
  faces are topological disks. It follows that all faces are
  topological disks with $\bar k$ sides.  But then we have a triangulation,
  quadrangulation or bipartite hexangulation of the torus.  Such maps,
  with the given vertex degrees, have been ruled out by
  Theorems~\ref{thm:57}--\ref{thm:24}.
\qed\end{proof}

The following procedure produces graphs to which
Corollary~\ref{cor:57_Embed} is applicable.
Let $G$ be any $6$-regular graph without loops or multiple edges
and with at least $8$ vertices.
Choose an edge~$ij$ of~$G$, and let~$k$ be any vertex not adjacent to~$i$.
Remove the edge~$ij$ and insert an edge~$ik$.
Vertices~$j$ and~$k$ now have degrees~$5$ and~$7$, respectively,
while all the other vertices still have degree~$6$.

The same can be done with $4$- and $3$-regular graphs.
We start with a graph of the required girth (again, the
regular tessellations of the torus provide examples)
and then simply choose vertex~$k$ in the above procedure far enough from~$i$.
Figures~\ref{fig:no-torus-graphs} and~\ref{fig:girth-6}
show graphs constructed by this procedure;
they are not embeddable in the torus.
\begin{figure}
  \centering \includegraphics[width=.8\textwidth]{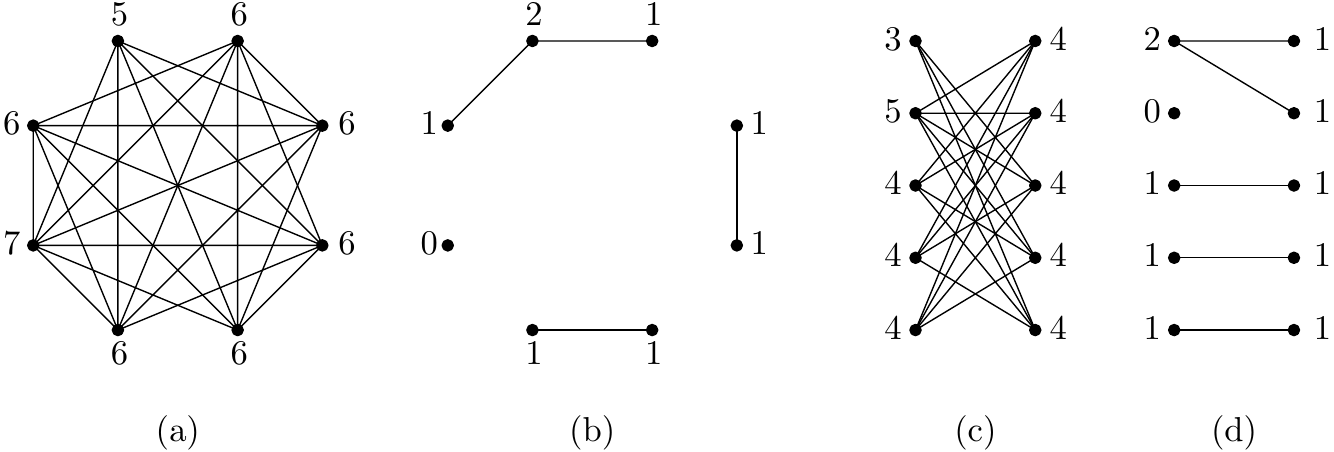}
  \caption{(a) A graph with vertex degrees $5\,6^6 7$, and (b)
  its complement;
(c) a graph of girth 4 with vertex degrees $3\,4^8 5$, and (d) its complement.
These are the smallest examples of their respective
classes of graphs.}
  \label{fig:no-torus-graphs}
\end{figure}
\begin{figure}
  \centering \includegraphics[scale=.9]{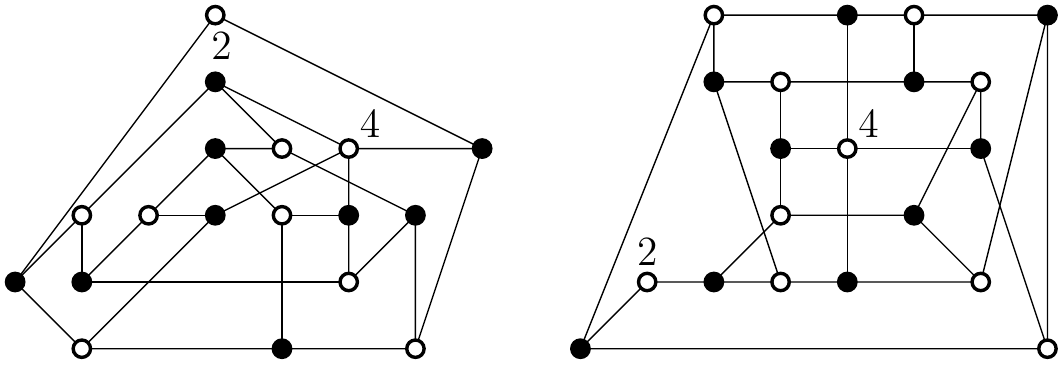}
  \caption{Bipartite graphs with girth 6 and vertex degrees $2\,3^k 4$.}
  \label{fig:girth-6}
\end{figure}

Any $5,\!7$-triangulation of the Klein bottle provides another
example of such a graph, as long as it has girth~$3$.
The one in Figure~\ref{fig:examples57}(b)
is too small---its 1-skeleton has girth only~$2$---but 
a refinement of it would work.

\begin{acknowledgements}
We would like to thank Ken Stephenson for 
mentioning this problem~\cite[Problem~13, p.~694]{DDG-problems}
and for the data for Figure~\ref{fig:KS57ex};
Ulrich Brehm, Gunnar Brinkmann and G\"unter M.~Ziegler
for pointing us to relevant literature;
and Frank Lutz for helpful discussions.
I.~Izmestiev and J.~M.~Sullivan were partially supported by the DFG Research
Group 565 ``Polyhedral Surfaces''. R.~Kusner was supported in part by
NSF Grant DMS-0076085. 
B.~Springborn was partially supported by
the DFG Research Center \textsc{Matheon}.
B.~Springborn and J.~M.~Sullivan were partially supported by DFG
SFB/Transregio 109 ``Discretization in Geometry and Dynamics''.
\end{acknowledgements}

\bibliography{torus}
\end{document}